\theoremstyle{plain}
\newtheorem{theorem}{Theorem}[section]
\newtheorem{lemma}[theorem]{Lemma}
\newtheorem{prop}[theorem]{Proposition}
\theoremstyle{definition}
\numberwithin{equation}{section}
\def\XXint#1#2#3{{\setbox0=\hbox{$#1{#2#3}{\int}$}
\vcenter{\hbox{$#2#3$}}\kern-.5\wd0}}
\title[Szego kernel of some Reinhardt domains]{On the asymptotics of the on-diagonal Szeg\"o kernel of certain Reinhardt domains}
\author[Karami]{Arash Karami}
\address{Department of Mathematics\\
Johns Hopkins University\\ Baltimore, MD 21210, USA}
\email{akarami@math.jhu.edu}
\author[Pingali]{Vamsi Pingali}
\address{Krieger 412, Department of Mathematics\\
Johns Hopkins University\\ Baltimore, MD 21210, USA}
\email{vpingali@math.jhu.edu}
\begin{document}

\begin{abstract}
We compute the leading and sub-leading terms in the asymptotic expansion of the Szeg\"o kernel on the diagonal of a class of pseudoconvex Reinhardt domains whose boundaries are endowed with a general class of smooth measures. We do so by relating it to a Bergman kernel over projective space.
\end{abstract}

\maketitle

\section{Introduction}
\indent The Szeg\"o and Bergman kernels play an important role in complex analysis \cite{MG,SB}, physics \cite{CJ}, random complex geometry \cite{AK,BS,TT}, and K\"ahler geometry \cite{SZ}. It is not possible to compute them for general domains/manifolds although special cases have been considered \cite{GH}. \\
\indent They have asymptotic expansions under very general assumptions. Computing even the first few terms of the asymptotic expansions can have many applications \cite{AK}. In fact we were motivated to compute them for the Szeg\"o kernel because they appear in the study of random complex polynomials \cite{AK}. In this paper we compute the leading and sub-leading terms for the Szeg\"o kernel of Reinhardt domains whose defining functions are homogeneous. Previously, similar computations were done using the Boutet De Monvel-Sj\"ostrand parametrix and the stationary phase method (see \cite{AK} and the references therein). Our computation relies on a relationship between the Bergman and Szeg\"o kernels. While this relationship is very closely related to the well known one for unit disc bundles \cite{SZ}, the novelty lies in our allowance of more general measures on the boundary of the domain.\\
\indent \emph{Acknowledgements}: We thank Bernie Shiffman, George Marinescu, and Steve Zelditch for useful discussions.
\section{Summary of results}
\indent In this section we review basic definitions and theorems, and present the main result of the paper. \\
\indent Let $\rho^{-1}(-\infty, 1) = \Omega \subset \mathbb{C}^{n+1}$ be a domain with a smooth boundary $M=\rho^{-1}(1)$ where $\rho$ is a smooth function on $\mathbb{C}^{n+1}$ having non-zero gradient on $M$. Assume that $M$ is endowed with a smooth volume form $\mu$. The holomorphic tangent space $T$ of $M$ is defined as the complex kernel of the $(1,0)$ form $\partial \rho$. The Hardy space $H(M,\mu)$ is defined as the space of all $L^2$ functions (with respect to $\mu$) $f$ on $M$ such that $\bar{\partial} \tilde{f} \vert_{T} = 0$ for any extension $\tilde{f}$ of $f$ to a neighbourhood of $M$.  \\
\indent The Szeg\"o kernel $S$ of $(\Omega, \mu)$ is a function $S: \Omega \times \Omega \rightarrow \mathbb{C}$ such that the orthogonal projection $\Pi : L^2(\Omega) \cap C^0 (\bar{\Omega}) \rightarrow H(M,\mu)$ is given by $\Pi (f) (z)= \displaystyle \int _{M} S(z,w)f(w) \mu$. It is not hard to see that the Szeg\"o kernel exists, is smooth away from the diagonal, and maybe computed as $S(z,w) = \displaystyle \sum _{i} \phi _i (z) \overline{\phi_i (w)}$ where $\phi_i$ form an orthonormal basis of $H(M,\mu)$ \cite{SK}. If $\Omega$ is a strictly pseudoconvex domain (i.e. the defining $\rho$ may be chosen to be proper and strictly plursubharmonic) then it maybe proven that the $\phi_i$ maybe chosen as the restriction of holomorphic functions on $\Omega$. It is not possible to compute the Szeg\"o kernel in general. However, in special cases one may get more information. If $\rho$ and $\mu$ are $S^1$ invariant (i.e. complete circular domains) then $H(M,\mu)$ consists of $S^1$ invariant functions. It may be proven the decomposition $H(M,\mu) = \displaystyle \oplus _{k=0}^{\infty} H_k (M,\mu)$ holds \cite{AK}(where the $H_k$ are irreducible representations of $S^1$). Thus $S=\displaystyle \sum _{k=0} ^{\infty} \Pi_k$. The $\Pi_k$ are called partial Szeg\"o kernels. There exists an asymptotic expansion for it in terms of $k$ for certain strictly pseudoconvex domains.\\
\indent In this paper we restrict ourselves to domains $\Omega = \rho^{-1}[0,1)$  where $\rho = f(\vert x_0 \vert , \vert x_1 \vert \ldots)$ where $f$ is homogeneous of order $l$ (as a consequence these domains are Reinhardt). For example, $\rho=\sum_{i=0}^{2}\vert x_i \vert^4 +  (\vert x_0 x_1\vert^2 + \vert x_0 x_2\vert^2 + \vert x_1 x_2\vert^2)$ on $\mathbb{C}^{3}$. It may be proven that monomials of order $k$ form an orthogonal basis for $H_{k}(M,\mu)$ with respect to the inner product in equation\cite{AK,SK}. Let $J=(j_{0},\dots,j_{n})$, $x^{J}$ be a monomial of degree n, $|J|=n$, and $x=(x_{0},\dots,x_{n})$. The partial Szeg\"o kernel is
\begin{equation}
\Pi_{k}(x,x)=\sum_{|J|=n}\frac{|x^{J}|^{2}}{\langle x^{J},x^{J} \rangle _{\mu}}\:\: J=(j_{0},\dots,j_{n}).
\label{03}
\end{equation}
where
\begin{equation}
(\langle x^{J},x^{J} \rangle_{\mu}=\int_{M}|x_{0}|^{2j_{0}}\dots|x_{n}|^{2j_{n}}\mu.
\label{04}
\end{equation}
\indent For the sake of brevity we define
\begin{equation}
\psi:\mathbb{C}^{n+1}-{(0,0,\ldots)}\longrightarrow\mathbb{R}\:\:\: \psi(x_{0},\dots,x_{n})=\rho^{-1/l}(x_{0},\dots,x_{n}).
\end{equation}
\indent Now we define the Bergman kernel. Let $(Y,\omega)$ be an  $n$-dimensional compact K\"ahler manifold and $(L^k\otimes E, h_L ^k \otimes h_E)$ be a hermitian holomorphic vector bundle over $Y$ (where $L$ is a line bundle). The Bergman kernel $B_k$ is defined as a section $B_k$ of End(E) such that the orthogonal projection
$$\Pi_{k} : L^2 (Y,L^k\otimes E,\omega, h_L ^k \otimes h_E) \rightarrow H(Y,L^k\otimes E,\omega, h_L ^k \otimes h_E),$$
is $\Pi_{k}(f)(z) = \displaystyle \int _Y B_k (z,w)f(w) \frac{\omega^n}{n!}$ where $H(.)$ is the space of holomorphic sections of $L^k \otimes E$. If $(L,h_L)$ is ample then there is an asymptotic expansion for the Bergman kernel \cite{Ma, RBeBBeSj}:
\begin{theorem}
There exist smooth coefficients $b_{r}(y)\in End(E)_{y}$ which are polynomials in $\omega$ ,$-\partial\bar{\partial}\ln(h_{E})$ and their derivatives and reciprocals of linear combinations of eigenvalues of $\dot{R_{L}}_{i}^{k}=-(\frac{\partial^{2}\log h_{L}}{\partial z_{i}\partial\bar{z}_{j}})(w^{k\bar{j}})$ at $y$, such that for any $j,l\in\mathbb{N}$, there exist $C_{k,l}$ such that for any $k\in\mathbb{N}$,
\begin{equation}
|B_{k}(y,y)-\sum_{r=0}^{l}b_{r}(y)k^{n-r}|_{L^{j}(Y)}\leq C_{l,j}k^{n-l-1},
\end{equation}
and
\begin{equation}
b_{0}(y)=\det\left(\frac{\dot{R_{L}}}{2\pi}\right)Id_{E},
\end{equation}
and
\begin{equation}
b_{1}=b_{0}\left(\frac{r_{\omega}}{2}-\Delta_{\omega}(\log(b_{0}))+\sqrt{-1}\Lambda_{\omega}(-\partial\bar{\partial}\ln(h_{E})\right).
\end{equation}
where $r_{\omega}$ is the scalar curvature of $(Y,\omega)$ in the complex geometry convention (i.e. $\omega^{i\bar{j}} \mathrm{Ric}_{i\bar{j}}$), $\Delta _{\omega} f = -\omega ^{i\bar{j}} f_{i\bar{j}}$ and $\sqrt{-1}\Lambda _{\omega} \partial \bar{\partial} \ln(h_E) = \omega ^{i\bar{j}} (\ln(h_E)) _{i\bar{j}}$.
\label{bergasym}
\end{theorem}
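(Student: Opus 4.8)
The plan is to follow the analytic localization and rescaling method of Ma--Marinescu \cite{Ma}, treating the twisting bundle $E$ and the fact that $\omega$ need not be the first Chern form of $(L,h_{L})$ as mild perturbations of the basic set-up. First I would replace the projection $\Pi_{k}$ onto holomorphic sections by the Kodaira Laplacian $\Box_{k}=\bar\partial^{*}\bar\partial$ acting on $C^{\infty}(Y,L^{k}\otimes E)$, whose kernel is exactly $H(Y,L^{k}\otimes E)$. The crucial analytic input is the \emph{spectral gap}: the Bochner--Kodaira--Nakano formula together with the positivity of $(L,h_{L})$ yields, on $(0,q)$-forms with $q\ge 1$, an estimate $\Box_{k}\ge c_{1}k-c_{2}$ with $c_{1}>0$; consequently $B_{k}$ is the kernel of the Riesz projector $P_{0}=\frac{1}{2\pi\sqrt{-1}}\oint(\lambda-\Box_{k})^{-1}\,d\lambda$ taken around a small circle at the origin, and the whole problem reduces to understanding the resolvent of $\Box_{k}$ near the bottom of its spectrum.

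Next I would localize. Using finite propagation speed of $\cos(t\sqrt{\Box_{k}})$ (equivalently the exponential off-diagonal decay of $e^{-\frac{t}{k}\Box_{k}}$), the on-diagonal value $B_{k}(y,y)$ depends, modulo an error $O(k^{-\infty})$, only on the geometry of $(Y,\omega,L,E,h_{L},h_{E})$ in a fixed small ball about $y$. I would then choose normal coordinates centered at $y$, trivialize $L$ and $E$, and introduce the rescaled variable $u=\sqrt{k}\,z$. Under this parabolic rescaling $\frac{1}{k}\Box_{k}$ converges as $k\to\infty$ to a model operator $\mathcal{L}_{0}$ on $\mathbb{C}^{n}$: a sum of harmonic oscillators with frequencies the eigenvalues $a_{1}(y),\dots,a_{n}(y)$ of $\dot R_{L}(y)$, tensored inertly with $\mathrm{End}(E)_{y}$. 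The Bergman kernel of $\mathcal{L}_{0}$ is the Bargmann--Fock projector, whose diagonal value is $\prod_{j}\frac{a_{j}(y)}{2\pi}\,\mathrm{Id}_{E}=\det\!\big(\tfrac{\dot R_{L}}{2\pi}\big)\,\mathrm{Id}_{E}$; this simultaneously gives the existence of the leading term and the formula for $b_{0}$.

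For the full expansion I would expand the rescaled operator as $\frac{1}{k}\Box_{k}=\mathcal{L}_{0}+k^{-1/2}\mathcal{L}_{1}+k^{-1}\mathcal{L}_{2}+\cdots$, the $\mathcal{L}_{r}$ being differential operators with polynomial coefficients built from the Taylor expansions at $y$ of $\omega$, of $-\partial\bar\partial\ln h_{L}$, and of $-\partial\bar\partial\ln h_{E}$. Substituting into the contour-integral formula for $P_{0}$ and solving the resulting hierarchy order by order yields kernels $\mathcal{F}_{r}(u,u')$ whose diagonal values, after undoing the rescaling, are the desired $b_{r}(y)k^{n-r}$. Each step inverts $\mathcal{L}_{0}$ only on the orthogonal complement of its kernel, and this is what produces the reciprocals of the eigenvalues $a_{j}$ in the stated structure of $b_{r}$; a parity argument---the model kernel is even in $u$ while $\mathcal{L}_{1}$ is odd---shows the half-integer powers vanish on the diagonal, leaving integer powers $k^{n-r}$ and the $C^{j}$-uniform remainder bound. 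The twisting bundle $E$ enters only at order $k^{-1}$, through $\sqrt{-1}\La_{\omega}(-\partial\bar\partial\ln h_{E})$, because its curvature is $O(1)$ while that of $L^{k}$ is $O(k)$.

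The main obstacle, and the only step that is genuine computation rather than soft analysis, is the explicit identification of $b_{1}$: one must compute the diagonal of $\mathcal{F}_{2}$, keeping the second-order Taylor data of $\omega$ (which assembles into the scalar curvature $r_{\omega}$), the second-order data of $\log h_{L}$ relative to $\omega$ (which produces the variation term $-\Delta_{\omega}\log b_{0}$, precisely the contribution absent when $\omega$ is chosen to be the curvature of $L$), and the curvature of $(E,h_{E})$. Organizing the harmonic-oscillator matrix elements so that these three pieces collapse to $b_{1}=b_{0}\big(\tfrac{r_{\omega}}{2}-\Delta_{\omega}(\log b_{0})+\sqrt{-1}\La_{\omega}(-\partial\bar\partial\ln h_{E})\big)$ is where the bookkeeping is heaviest; I would cross-check against the classical polarized case ($\omega$ the curvature of $L$, $E$ trivial), in which $b_{0}\equiv 1$ and the formula must reduce to the Tian--Yau--Zelditch value $b_{1}=r_{\omega}/2$.
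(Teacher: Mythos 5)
The paper does not actually prove this theorem: it is quoted verbatim from the literature, with the proof delegated to \cite{Ma} and \cite{RBeBBeSj}, and the rest of the paper uses it as a black box. So there is no internal proof to compare against; what you have written is a reconstruction of the argument in one of the two cited sources, namely the analytic-localization and rescaling method of Ma--Marinescu. As such it is a faithful and essentially correct outline: the spectral gap from Bochner--Kodaira--Nakano, localization by finite propagation speed, the parabolic rescaling to the Bargmann--Fock model whose diagonal value gives $b_{0}=\det(\dot R_{L}/2\pi)\,\mathrm{Id}_E$, the parity argument killing half-integer powers on the diagonal, and the order-by-order inversion of $\mathcal{L}_0$ producing the reciprocals of the eigenvalues of $\dot R_L$. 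You also correctly isolate the one feature that distinguishes this statement from the classical polarized Tian--Yau--Zelditch setting --- $\omega$ is not assumed to be the curvature of $(L,h_L)$ --- and identify that this is exactly what generates the $-\Delta_\omega\log b_0$ term in $b_1$. The other cited source, \cite{RBeBBeSj}, proves the same expansion by a genuinely different route (directly constructing local asymptotic Bergman kernels from a phase function and checking the reproducing property, with no spectral theory or model operator), so your choice of method is one of two available, not the only one. The only part of your argument that remains a promissory note is the explicit $b_1$ bookkeeping, which you flag honestly; since the theorem is being used here as a citation rather than proved, that is an acceptable level of detail, but it is the step a referee would ask you to either carry out or source precisely (Ma--Marinescu, Theorem 4.1.2 and its proof).
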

For ease of notation we introduce the following matrices,
\begin{equation}
H(\rho)=(\frac{\partial^{2}\rho}{\partial x_{i}\partial\overline{x}_{j}})_{0\leq i,j\leq n}\:\:\text{and}\:\: H_{0}(\rho)=(\frac{\partial^{2}\rho}{\partial x_{i}\partial\overline{x}_{j}})_{1\leq i,j\leq n},
\label{c4}
\end{equation}
\begin{equation}
\nabla\rho(x)=(\frac{\partial\rho}{\partial\overline{x}_{0}},\dots,\frac{\partial\rho}{\partial\overline{x}_{n}})\:\:\text{and}
\:\:\nabla_{0}\rho(x)=(\frac{\partial\rho}{\partial\overline{x}_{1}},\dots,\frac{\partial\rho}{\partial\overline{x}_{n}}).
\label{c5}
\end{equation}

\indent Finally we state our main theorem
\begin{theorem}
If $x=(x_{0},\dots,x_{n})\in\Omega$ where $\Omega=\rho^{-1}[0,1)$ is defined earlier, then the first two terms of the asymptotic expansion of the reproducing kernel of the projection map
\begin{equation}
\Pi_{k}:L^{2}(\Omega,\mu)\longrightarrow H_{k}(M),
\end{equation}
are
\begin{equation}
a_{0}(x,x)=
\left(\frac{2}{l}\right)^{n+2}\frac{\det(H(\rho))}{2\pi^{n+1}e^{u(x\psi(x))}\psi^{2n-l(n+1)}\sqrt{\left(\frac{\partial \psi}{\partial \vert x_0 \vert}\right)^2 + \left(\frac{\partial \psi}{\partial \vert x_1 \vert}\right)^2+\ldots}}, \nonumber
\end{equation}
and
\begin{gather}
a_1 (x,x)= \frac{a_0}{4} \left( 2n(n+1) + 2\displaystyle  \sum _{\mu=0}^n \vert x \vert^2 \frac{\partial ^2 \ln(a_0)}{\partial x_{\mu} \partial \overline{x_{\mu}}} \right) ,\nonumber
\end{gather}
where $\Pi_{k}(x,x)=a_{0}(x,x)k^{n+1}+a_{1}(x,x)k^{n}+O(k^{n-1}) + \ldots$.
\label{main}
\end{theorem}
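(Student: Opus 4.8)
The plan is to identify the partial Szeg\"o kernel $\Pi_k(x,x)$ on $M$ with a genuine Bergman kernel on $\mathbb{P}^n$ and then read off the two coefficients from Theorem \ref{bergasym}. The first step is to set up the geometric dictionary. Since $\rho=f(|x_0|,\dots,|x_n|)$ is homogeneous of degree $l$, the function $\psi=\rho^{-1/l}$ is homogeneous of degree $-1$, and the radial map $x\mapsto\psi(x)x$ projects $\mathbb{C}^{n+1}\setminus\{0\}$ onto $M=\{\rho=1\}$. Hence $\Omega=\{\psi>1\}$ is exactly the unit disc bundle of the tautological line bundle $O(-1)\to\mathbb{P}^n$ for the Hermitian metric $\|v\|^2=\rho(v)^{2/l}=\psi(v)^{-2}$, and $M$ is its unit circle bundle, a principal $S^1$-bundle over $\mathbb{P}^n$. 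Dually, $O(1)$ carries the metric $h$ in which a degree-$k$ homogeneous polynomial $P$, viewed as a section $s_P$ of $O(k)$, has pointwise norm $|s_P|_{h^k}^2([x])=|P(x)|^2\psi(x)^{2k}$, and the curvature of $h$ is the K\"ahler form $\omega=\frac{2}{l}\sqrt{-1}\,\partial\bar\partial\log\rho$ on $\mathbb{P}^n$. Under this dictionary the orthogonal basis of monomials $x^J$, $|J|=k$, corresponds to a basis $s_J$ of $H^0(\mathbb{P}^n,O(k))$.

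The second step is to match the two inner products. The key observation is that $|x^J|^2=\prod_i|x_i|^{2j_i}$ is invariant under the $S^1$-action and hence constant along the fibers of $M\to\mathbb{P}^n$, while on $M$ (where $\psi=1$) it coincides with $|s_J|_{h^k}^2$. Integrating $\mu$ along the circle fibers therefore gives, with \emph{no} approximation,
\[
\langle x^J, x^J\rangle_\mu \;=\; \int_M |x^J|^2\,\mu \;=\; \int_{\mathbb{P}^n}|s_J|_{h^k}^2\,d\tilde\mu ,
\]
where $d\tilde\mu$ is the pushforward of $\mu$ to $\mathbb{P}^n$. Writing $d\tilde\mu=e^{-u'}\frac{\omega^n}{n!}$ for a smooth $u'$ and expressing the fiber Jacobian through the surface measure on $M$, I expect the weight $e^{u}$ and the gradient factor $\sqrt{\sum_i(\partial\psi/\partial|x_i|)^2}$ to enter at exactly this point. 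Letting $E$ be the trivial line bundle with metric $e^{-u'}$, orthogonality of the $s_J$ then yields, on $M$,
\[
\Pi_k(x,x) \;=\; \sum_{|J|=k}\frac{|s_J|_{h^k}^2}{\|s_J\|_{L^2}^2} \;=\; B_k([x],[x]),
\]
the Bergman kernel of $(O(k)\otimes E,\,h^k\otimes e^{-u'})$ over $(\mathbb{P}^n,\omega)$. Consequently the asymptotic expansion of $\Pi_k(x,x)$ is inherited term-by-term from Theorem \ref{bergasym}, so that $a_0=b_0$ and $a_1=b_1$.

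The third step is the translation from intrinsic K\"ahler data back to $\rho$ and $\mu$. Using the homogeneity of $\rho$, the restriction of the Hessian of $\log\rho$ to $\mathbb{P}^n$ is related to the full complex Hessian $H(\rho)$ by a bordered-determinant identity, in which the single radial (homogeneous) direction supplies the extra eigenvalue; this is what turns the $n\times n$ factor $\det(\dot R_L/2\pi)$ into the $(n+1)\times(n+1)$ determinant $\det H(\rho)$ together with the powers of $\psi$ and the constant $(2/l)^{n+2}$, giving $a_0$. For $a_1$ I would substitute $b_0=a_0$ into $b_1=b_0\big(\tfrac{r_\omega}{2}-\Delta_\omega\log b_0+\sqrt{-1}\Lambda_\omega(-\partial\bar\partial\ln h_E)\big)$, use $\mathrm{Ric}_\omega=-\sqrt{-1}\,\partial\bar\partial\log\det g$ to isolate the topological constant $\tfrac{n(n+1)}{2}$ coming from $c_1(\mathbb{P}^n)=n+1$, and reassemble the remaining Laplacian and weight terms, via homogeneity, into the Euclidean expression $\tfrac12\sum_\mu|x|^2\,\partial^2\ln a_0/\partial x_\mu\partial\overline{x_\mu}$, which reproduces the stated formula.

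The main obstacle I anticipate is precisely this last translation. Pinning down $d\tilde\mu$, the weight $e^{u}$ and the surface factor $\sqrt{\sum_i(\partial\psi/\partial|x_i|)^2}$ requires a careful coarea/fiber-integration argument, and the bordered-Hessian identity relating $\det(\dot R_L)$ to $\det H(\rho)$ with the correct powers of $\psi$ must be established cleanly. Equally delicate is the $a_1$ bookkeeping: matching $r_\omega$ to the constant $n(n+1)$ and converting $\Delta_\omega$ into the Euclidean Laplacian weighted by $|x|^2$ demands that all normalization constants be tracked consistently through the disc-bundle identification, and this is where I expect the bulk of the computational effort to lie.
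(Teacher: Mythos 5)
Your overall strategy---realize $M$ as the unit circle bundle of $O(-1)$ with the metric $\psi^{-2}$, identify degree-$k$ monomials with sections of $O(k)$ so that $|s_J|^2_{h^k}=\pi^*|x^J|^2$, push $\mu$ down to $\mathbb{CP}^n$ by fiber integration, and read off the coefficients from Theorem \ref{bergasym}---is exactly the paper's. But your key identity $\Pi_k(x,x)=B_k([x],[x])$ is off by the weight: with the convention of Theorem \ref{bergasym} (where $b_0=\det(\dot{R_L}/2\pi)\,\mathrm{Id}_E$ carries no $h_E$-dependence), the Bergman density on the diagonal is $\sum_J |s_J|^2_{h^k\otimes h_E}/\|s_J\|^2_{L^2}$, whereas your chain of equalities produces $\sum_J|s_J|^2_{h^k}/\|s_J\|^2_{L^2}$. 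The correct relation is $\pi^*\Pi_k=B_k/h_E$ (the paper's Lemma \ref{bergszego}), and the missing factor $1/h_E$ is precisely what places $e^{u}$ and $\sqrt{\sum_i(\partial\psi/\partial|x_i|)^2}$ in the denominator of $a_0$. As written, your conclusion ``$a_0=b_0$'' cannot be right: with your choice of base metric $\omega=\Theta_h$ one has $\dot{R_L}=\mathrm{Id}$, so $b_0$ is a constant, while the claimed $a_0$ is manifestly non-constant.

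A second, related problem is the choice of base K\"ahler metric. Taking $\omega=\frac{2}{l}\sqrt{-1}\,\partial\bar\partial\log\rho$ is legitimate (its positivity is the paper's Lemma \ref{c6}), but it undermines your plan for $a_1$: the scalar curvature of this $\omega$ is not constant and is not determined by $c_1(\mathbb{CP}^n)$; only for the Fubini--Study metric is $r_{\omega}$ the constant $2n(n+1)$, which is how the paper extracts the first term of $a_1$ directly from $b_1$. With your normalization the non-constant $r_\omega$ and the $\Lambda_\omega\partial\bar\partial\log h_E$ term would have to be recombined by hand into the stated answer, a substantially heavier computation than you anticipate. The paper instead keeps $\omega_{FS}$ as the base metric, absorbs the ratio of measures into $h_E$ (Lemma \ref{07}), and obtains $\det H(\rho)$ from the ratio $\det\Theta_h/\det\Theta_{FS}$ via a bordered-determinant/Cramer's-rule identity (Lemma \ref{3})---essentially the identity you anticipate, but located in $b_0$ rather than in the weight. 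Your fiber-integration step and the homogeneity bookkeeping for $a_1$ are otherwise on the right track, so the proof is repairable once the $1/h_E$ factor is restored and the base metric is fixed to $\omega_{FS}$.
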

\section{Proof of the main theorem}
\indent In order to compute the asymptotic expansion of $S$, we relate the partial Szeg\"{o} kernel $\Pi_k$ of $\Omega$ with the Bergman kernel $B_k$ of $(\mathcal{O}(k)\otimes E, h_{k} \otimes h_{E})$ over $(\mathbb{CP}^n,\omega _{FS})$ where $E$ is a trivial line bundle. The Fubini-study metric in a chart is $\omega_{FS} = i\partial \bar{\partial} \ln (1+\vert z \vert^2)$. After doing so we use the asymptotic expansion of the Bergman kernel as given in \cite{Ma},\cite{RBeBBeSj}.\\
\indent In order to simplify computations define
\begin{equation}
\psi:\mathbb{C}^{n+1}-{(0,0,\ldots)}\longrightarrow\mathbb{R}\:\:\: \psi(x_{0},\dots,x_{n})=\rho^{-1/l}(x_{0},\dots,x_{n}).
\end{equation}
\indent The gradient of $\rho$ is nonzero on $M=\rho^{-1}(1)$ and hence $M$ is a $2n+1$-dimensional manifold admitting a natural $S^{1}$ action : $x \rightarrow e^{i\theta} x$. Identifying $\tilde{M}=M/S^{1}$, we have a quotient map
$$\tilde{p}:M\longrightarrow\tilde{M}=M/S^{1} \:\:\tilde{\pi}(x)=\tilde{x}.$$
\begin{lemma}
$\tilde{M}$ is an $n$-dimensional complex manifold biholomorphic to $\mathbb{CP}^n$ via a map $\pi:\mathbb{CP}^n\rightarrow M$.
\end{lemma}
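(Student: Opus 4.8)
The plan is to realize $\tilde M$ as the base of an $S^1$-bundle obtained by restricting the tautological projection $p:\mathbb{C}^{n+1}\setminus\{0\}\to\mathbb{CP}^n$, $p(x)=[x]$, to $M$. The first thing I would record is that the hypothesis $\rho=f(|x_0|,\dots,|x_n|)$ with $f$ homogeneous of degree $l$ makes $\rho$ invariant under $x\mapsto e^{i\theta}x$ and homogeneous of degree $l$ under real dilations, $\rho(tx)=t^l\rho(x)$ for $t>0$. Hence for every $x\ne 0$ there is a \emph{unique} positive dilation $t=\rho(x)^{-1/l}=\psi(x)$ with $tx\in M$, and since $\rho$ is phase-invariant the whole complex line $\ell_x=\mathbb{C}^* x$ meets $M$ in exactly one circle, namely the $S^1$-orbit of $\psi(x)x$. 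This already yields a set-theoretic bijection between $M/S^1$ and the space of complex lines $\mathbb{CP}^n$: the map $\bar p:\tilde M\to\mathbb{CP}^n$ induced by $p|_M$ is a bijection, with inverse sending $[x]$ to the $S^1$-orbit $\tilde p(\psi(x)x)$.

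Next I would upgrade this to a diffeomorphism. The $S^1$-action on $M$ is free (since $0\notin M$, the equation $e^{i\theta}x=x$ forces $e^{i\theta}=1$) and proper, so $\tilde M$ is a smooth $2n$-manifold and $\tilde p$ is a principal $S^1$-bundle. To see that $p|_M$ is a submersion it is enough that $M$ be transverse to each fibre $\ell_x$, and this follows from Euler's identity: differentiating $\rho(tx)=t^l\rho(x)$ at $t=1$ and using that $\rho$ is real and depends only on the moduli $|x_i|$ gives $\sum_i x_i\,\partial\rho/\partial x_i=\tfrac{l}{2}\rho$, which equals $l/2\ne 0$ on $M$. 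Thus the radial direction is not tangent to $M$, $\rho$ restricts to $\ell_x$ with nonvanishing radial derivative, and $M\cap\ell_x$ is a regularly cut-out circle. Therefore $p|_M$ is a surjective submersion whose fibres are precisely the $S^1$-orbits, and $\bar p$ is a diffeomorphism.

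The part needing the most care, and the main obstacle, is promoting $\bar p$ to a \emph{biholomorphism}, i.e. matching the complex structure that $\tilde M$ inherits from the CR structure of $M$ with the standard one on $\mathbb{CP}^n$. That structure is defined by transporting $T=\ker\partial\rho$ down via $d\tilde p$ and pushing the operator $J$ to $\tilde M$. Here I would reuse the identity above: $\sum_i x_i\,\partial\rho/\partial x_i=l/2\ne 0$ says exactly that the complex radial vector $R=\sum_i x_i\,\partial/\partial x_i$ does not lie in $T_x=\ker\partial\rho$, so $R$ spans a complement to $T_x$ inside $T^{1,0}_x(\mathbb{C}^{n+1}\setminus\{0\})$. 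Since $p$ is holomorphic with $\ker dp^{1,0}_x=\mathbb{C}\,R$, the restriction $dp^{1,0}_x|_{T_x}$ is a $\mathbb{C}$-linear isomorphism onto $T^{1,0}_{[x]}\mathbb{CP}^n$ (both of complex dimension $n$), which is precisely the assertion that $\bar p$ intertwines the two complex structures; integrability on $\tilde M$ is then automatic, being transported from $\mathbb{CP}^n$. Combining the three steps gives the biholomorphism $\bar p$ (equivalently its inverse $\pi$). The one hypothesis I would flag as requiring justification is the strict positivity $\rho>0$ on $\mathbb{C}^{n+1}\setminus\{0\}$ used when inverting $\rho^{-1/l}$, which follows from $\rho$ being a defining function of the domain $\Omega\ni 0$ together with its homogeneity.
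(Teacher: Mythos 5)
Your proof is correct, but it takes a genuinely different route from the paper's. The paper proceeds by bare hands: it covers $\tilde M$ by the sets $\tilde M_i=\{x_i\neq 0\}$, writes down explicit charts $\phi_i(w)=\psi(\dots)( \dots,w,\dots)$ modelled on the standard affine charts of $\mathbb{CP}^n$, checks that the transition maps $\phi_i^{-1}\circ\phi_j$ are literally the standard $\mathbb{CP}^n$ transition functions $(w_1,\dots,w_n)\mapsto(1/w_1,\dots,w_n/w_1)$, and then observes that $[x]\mapsto \tilde p(\psi(x)x)$ (which is exactly your inverse map) identifies the two atlases. You instead restrict the tautological projection $p:\mathbb{C}^{n+1}\setminus\{0\}\to\mathbb{CP}^n$ to $M$, and use freeness/properness of the $S^1$-action plus the Euler identity $\sum_i x_i\,\partial\rho/\partial x_i=\tfrac{l}{2}\rho$ (which the paper also records later, as its equation for the homogeneity of $\rho$) to get a diffeomorphism, and then match the CR-induced complex structure on $\tilde M$ with the standard one via $\ker dp^{1,0}=\mathbb{C}R$ and $R\notin\ker\partial\rho$. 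What your approach buys is a cleaner answer to a question the paper's chart construction quietly sidesteps: that the complex structure $\tilde M$ inherits from the CR structure of $M$ (the one relevant to the Hardy space) coincides with the one defined by the explicit charts. What the paper's approach buys is brevity and the explicit formula for $\pi$, which is then used throughout the rest of the argument (e.g.\ in Lemma \ref{05} and Lemma \ref{07}); if you adopt your route you should still record that formula. Your closing remark about needing $\rho>0$ on $\mathbb{C}^{n+1}\setminus\{0\}$ is well taken --- the paper uses $\psi=\rho^{-1/l}$ without comment, and positivity does follow from homogeneity together with $M=\rho^{-1}(1)$ being the full boundary of the domain containing the origin.
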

\begin{proof}
Write $\tilde{M}=\cup_{i=0}^{n}\tilde{M}_{i}$ where $\tilde{M}_{i}=\{(x_{0},\dots,x_{n})\in \tilde{M} : x_{i}\neq 0\}$ and define
$$\phi_{0}:\mathbb{C}^{n}\longrightarrow\tilde{M}_{0}\:\: \phi_{0}(w_{1},\dots,w_{n})=\psi(1,w_{1},\dots,w_{n})(1,w_{1},\dots,w_{n}),$$
and similarly we define $\phi_{i}$. The $\phi_{i}$ are certainly homeomorphisms. To show that $\phi_{i}^{-1}o\phi_{j}$ is holomorphic, we do so for $i=1, j=0$ and the same proof works for different $i,j$.
\begin{equation}
\begin{split}
\phi_{1}^{-1}o\phi_{0}(w_{1},\dots,w_{n})&=\phi_{1}^{-1}(\psi(1,w_{1},\dots,w_{n})(1,w_{1},\dots,w_{n}))\\
&=\phi_{1}^{-1}(\psi(1,1/w_{1},\dots,w_{n}/w_{1})(1/w_{1},1,\dots,w_{n}/w_{1}))=(1/w_{1},\dots,w_{n}/w_{1}).\\
\end{split}
\end{equation}
Hence $\phi_{1}^{-1}o\phi_{0}$ is holomorphic on $\phi_{0}^{-1}(\tilde{M}_{0}\cap\tilde{M}_{1})$. \\
Define the map
$$\pi:\mathbb{CP}^n\longrightarrow M\:\:\text{where}$$
$$\pi([x_{0},\dots,x_{n}])=\psi(x_{0},\dots,x_{n})(x_{0},\dots,x_{n}).$$
It is easy to see that $\tilde{p}\circ\pi$ is a well-defined biholomorphism onto $\tilde{M}$. \end{proof}
\begin{lemma}
The defining function $\rho$ induces a Hermitian metric $h$ on the hyperplane section bundle $(O(1),\mathbb{CP}^{n})$ such that
\begin{eqnarray}
h_{\beta}([z_0,z_1,\ldots,1,z_{\beta+1},\ldots])&=&\psi^2(z_0,z_1,\ldots,1,z_{\beta+1},\ldots)\:\:\text{on}\:  U_{\beta}\:\text{and}\: , \nonumber \\
\Vert e_{\alpha} \Vert_{h^k} ^2 &=& \pi^{*} \vert x^{\alpha} \vert^2.
\end{eqnarray}
 \label{05}
 \end{lemma}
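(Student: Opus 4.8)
The plan is to build the Hermitian metric $h$ on $\mathcal{O}(1)$ chart-by-chart and then to read off both asserted identities as consequences of the $(-1)$-homogeneity of $\psi$. The one fact that drives everything is that for complex scaling $\lambda\in\mathbb{C}^{*}$ one has $\rho(\lambda x)=|\lambda|^{l}\rho(x)$, since $\rho=f(|x_{0}|,\dots,|x_{n}|)$ with $f$ homogeneous of order $l$; hence $\psi(\lambda x)=|\lambda|^{-1}\psi(x)$. I would isolate this relation at the outset, because it is the engine behind the whole lemma.

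First I would fix the standard trivializations of $\mathcal{O}(1)$: on $U_{\beta}=\{x_{\beta}\neq 0\}$ let $e_{\beta}$ be the local holomorphic frame corresponding to the tautological section $x_{\beta}$, so that on an overlap $U_{\alpha}\cap U_{\beta}$ one has $e_{\alpha}=(x_{\alpha}/x_{\beta})\,e_{\beta}$. A Hermitian metric is then precisely the data of positive functions $h_{\beta}:=\Vert e_{\beta}\Vert_{h}^{2}$ subject to the compatibility $h_{\alpha}=|x_{\alpha}/x_{\beta}|^{2}h_{\beta}$ on overlaps. I would define $h_{\beta}$ to be $\psi^{2}$ evaluated at the distinguished representative of the point, i.e.\ $h_{\beta}=\psi^{2}(x/x_{\beta})=\psi^{2}(z_{0},\dots,1,\dots)$ on $U_{\beta}$, which is exactly the first displayed formula.

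The main point, and the only step that is more than bookkeeping, is to verify this compatibility; here the homogeneity does all the work. Applying $\psi(\lambda x)=|\lambda|^{-1}\psi(x)$ with $\lambda=1/x_{\beta}$ gives $h_{\beta}=\psi^{2}(x/x_{\beta})=|x_{\beta}|^{2}\psi^{2}(x)$, whence $h_{\alpha}/h_{\beta}=|x_{\alpha}/x_{\beta}|^{2}$, exactly the required transition law. Thus $h$ is a genuine, globally well-defined metric on $\mathcal{O}(1)$, and the induced metric on $\mathcal{O}(k)=\mathcal{O}(1)^{\otimes k}$ has frame $e_{\beta}^{\otimes k}=x_{\beta}^{k}$ with $\Vert e_{\beta}^{\otimes k}\Vert_{h^{k}}^{2}=h_{\beta}^{k}=\psi^{2k}(x/x_{\beta})$. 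I expect the transition check to be the only genuine obstacle; once homogeneity is invoked correctly it collapses immediately.

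Finally, for the second formula I would expand the monomial section $e_{\alpha}=x^{\alpha}$ (with $|\alpha|=k$) in the frame on $U_{\beta}$: since $x^{\alpha}=(x^{\alpha}/x_{\beta}^{k})\,e_{\beta}^{\otimes k}$, we get $\Vert x^{\alpha}\Vert_{h^{k}}^{2}=|(x/x_{\beta})^{\alpha}|^{2}\,\psi^{2k}(x/x_{\beta})$. The function $|x^{\alpha}|^{2}\psi^{2k}$ is homogeneous of degree $0$, so this expression equals $|x^{\alpha}|^{2}\psi^{2k}(x)=|(\psi(x)x)^{\alpha}|^{2}$, which is precisely $\pi^{*}|x^{\alpha}|^{2}$ because $\pi([x])=\psi(x)\,x$; the $S^{1}$-ambiguity in $\pi$ is harmless since $|x^{\alpha}|^{2}$ is $S^{1}$-invariant. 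Both displayed identities then drop out of the same homogeneity computation.
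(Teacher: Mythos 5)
Your proposal is correct and follows essentially the same route as the paper: define $h_\beta=\psi^2$ on the normalized representative, check the transition law $h_\alpha/h_\beta=|x_\alpha/x_\beta|^2$ (you make the underlying homogeneity $\psi(\lambda x)=|\lambda|^{-1}\psi(x)$ explicit, which the paper leaves implicit), and then compute $\Vert e_\alpha\Vert_{h^k}^2=|x^\alpha|^2\psi^{2k}(x)=\pi^*|x^\alpha|^2$ by degree-zero homogeneity. No substantive differences.
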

\begin{proof}
 The collection of $h_{\beta}$ does patch up to give a Hermitian metric on $O(1)$. Indeed, the transition functions $g_{\beta \gamma}$ for $O(1)$ on $U_{\beta}\cap U_{\gamma}$ are equal to $\frac{x_{\beta}}{x_{\gamma}}$, $g_{\beta \gamma}([x_{0},\dots,x_{n}])=\frac{x_{\beta}}{x_{\gamma}}$ and by definition of $h_{\beta}$ we have
$$\frac{h_{\beta}}{h_{\gamma}}=\frac{|x_{\beta}|^{2}}{|x_{\gamma}|^{2}}=|g_{\beta \gamma}|^{2}.$$ Moreover on $U_{\beta}$,
\begin{eqnarray}
\Vert e_{\alpha} \Vert_{h^k} ^2 &=& \vert z_0 ^{\alpha _0} z_1 ^{\alpha _1} \ldots z_{\beta-1}^{\alpha_{\beta-1}} z_{\beta+1} ^{\alpha _{\beta+1}} \ldots \vert ^2 h_{\beta} ^k \nonumber \\
&=& \vert z_0 ^{\alpha _0} z_1 ^{\alpha _1} \ldots z_{\beta-1}^{\alpha_{\beta-1}} z_{\beta+1} ^{\alpha _{\beta+1}} \ldots \vert ^2 \psi ^{2k}(z_0,z_1,\ldots,1,z_{\beta+1},\ldots) \nonumber \\
&=& \pi ^{*} \vert x^{\alpha} \vert^2.
\end{eqnarray}
\end{proof}
Let $\mu_{ind}$ be the volume form on $M$ induced from the Euclidean metric on $\mathbb{C}^{n+1}$. An arbitrary $S^1$-invariant volume form $\mu$ on $M$ may be written as $\mu = e^u \mu_{ind}$ where $u$ is a smooth function on $\tilde{M}$. We want to find a function $h_E$ on $\mathbb{CP}^n$ such that $\displaystyle \int _{\mathbb{CP}^n} \pi^{*}(\mathcal{F}) h_E \frac{\omega_{FS}^n}{n!} = \int _M \mathcal{F} \mu$ for every $S^1$-invariant function $\mathcal{F}$ on $M$. To this end, notice that
\begin{equation}
\begin{split}
&\pi^{*}dx_{0}=d\psi(1,z_{1},\dots,z_{n})\:\:\text{and}\\
&\pi^{*}dx_{i}=d(z_{i}\psi(1,z_{1},\dots,z_{n}))=z_{i} d\psi+\psi dz_{i} \:\:\text{when i=1,\dots,n}.\\
\end{split}
\label{06}
\end{equation}
\begin{lemma}
There is a smooth function $h_{E}$ on $\mathbb{CP}^{n}$ satisfying
$$\displaystyle h_{E}([y_0,y_{1},\dots,y_{n}])=2\pi \pi^{*}(e^u)\vert y \vert ^{2n+2} (\frac{\psi^{2}(\vert y_0 \vert, \vert y_1 \vert, \ldots)}{2})^{n} \sqrt{\left (\frac{\partial \psi}{\partial \vert y_0 \vert} \right ) ^2 + \left (\frac{\partial \psi}{\partial \vert y_1 \vert} \right ) ^2  + \ldots},$$
and
$$\displaystyle \int _{\mathbb{CP}^n} \pi^{*}(\mathcal{F}) h_E \frac{\omega_{FS}^n}{n!} = \int _M \mathcal{F} \mu ,$$
\label{07}
for every $S^1$-invariant function $\mathcal{F}$ on $M$.
\end{lemma}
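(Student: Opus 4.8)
The plan is to read the required identity as a fiber--integration (co-area) statement for the circle bundle $\tilde p:M\to\tilde M\cong\mathbb{CP}^n$, and then to read off $h_E$ as the Radon--Nikodym density of the pushed-forward measure against $\frac{\omega_{FS}^n}{n!}$. Since $\mathcal{F}$, $u$ and $\psi$ are all $S^1$-invariant, both sides of the claimed integral equality descend to $\tilde M$; hence it suffices to compute the measure on $\tilde M$ obtained by integrating $\mu=e^u\mu_{ind}$ over the circle orbits and to compare it, through the biholomorphism $\tilde p\circ\pi$ of the previous lemma, with the Fubini--Study volume. The function $h_E$ is then forced to be exactly that comparison density.

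First I would exploit the homogeneity of $\rho$ (degree $l$) to introduce the polar parametrization $(y,t)\mapsto ty$ of $\mathbb{C}^{n+1}\setminus\{0\}$ by $M\times\mathbb{R}_{>0}$, under which $\rho(ty)=t^l$. A co-area argument together with Euler's identity $\langle y,\nabla_{\mathbb R}\rho(y)\rangle=l\rho(y)=l$ on $M$ yields $dV_{\mathrm{Eucl}}=t^{2n+1}\frac{l}{|\nabla_{\mathbb R}\rho(y)|}\,dt\,\mu_{ind}(y)$, where $|\nabla_{\mathbb R}\rho|$ denotes the Euclidean norm of the real gradient. Because $\rho=f(|x_0|,\dots,|x_n|)$ depends only on the moduli, one has $|\nabla_{\mathbb R}\rho|^2=\sum_j(\partial\rho/\partial|x_j|)^2$, and differentiating $\psi=\rho^{-1/l}$ on $M$ converts this into $|\nabla_{\mathbb R}\rho|=l\sqrt{\sum_j(\partial\psi/\partial|x_j|)^2}$. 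This is precisely the origin of the square-root factor in the stated formula for $h_E$, the factors of $l$ cancelling.

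Next I would work in the affine chart $U_0$ (the other $U_\beta$ being identical) and parametrize the part of $M$ lying over $\tilde M_0$ by $\Phi(z,\theta)=e^{i\theta}\psi(1,z_1,\dots,z_n)(1,z_1,\dots,z_n)$, $(z,\theta)\in\mathbb{C}^n\times S^1$, which sweeps out the $S^1$-orbit through the local section $\pi$. Pulling $\mu_{ind}$ back along $\Phi$ and integrating over $\theta\in[0,2\pi)$ produces the factor $2\pi$ (by $S^1$-invariance) together with a density on the $z$-chart. The radial scaling by $\psi(1,z)$ multiplies each of the $n$ complex tangent directions, contributing $\psi^{2n}=(\psi^2)^n$, while comparison with Lebesgue measure $\prod_j\frac i2\,dz_j\wedge d\bar z_j$ accounts for the $(1/2)^n$. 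Dividing by $\frac{\omega_{FS}^n}{n!}=\frac{1}{(1+|z|^2)^{n+1}}\prod_j\frac i2\,dz_j\wedge d\bar z_j$ and using $|y|^{2n+2}=(1+|z|^2)^{n+1}$ in the representative $y_0=1$ cancels the Fubini--Study denominator and assembles the claimed expression for $h_E$, with the invariant factor $\pi^*(e^u)$ riding along unchanged.

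The main obstacle is the honest Jacobian bookkeeping in this last step. Since the radial factor $\psi(1,z)$ is a non-holomorphic function of $z$, the differential of $\Phi$ is not block-diagonal in the $(z,\theta)$ splitting and a priori contains mixed terms coupling the angular direction to the horizontal ones; the content of the lemma is that, after integrating out $\theta$ and dividing by $\omega_{FS}^n/n!$, these contributions reorganize exactly into the compact product of $|y|^{2n+2}$, $(\psi^2/2)^n$, and the gradient square-root, with no residual factors. A secondary point is that $h_E$ must be a globally defined smooth function on $\mathbb{CP}^n$ rather than a chart-dependent one; this follows because the integral identity it is required to satisfy is manifestly coordinate-free, so the local expressions computed in the various $U_\beta$ necessarily agree on overlaps and patch to a single smooth $h_E$.
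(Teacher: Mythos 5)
Your overall strategy is the paper's: fiber-integrate $\mu$ over the $S^1$-orbits to get a measure on $\tilde M\cong\mathbb{CP}^n$, express everything in the chart $U_0$ through the section $z\mapsto\psi(1,z)(1,z)$, and read off $h_E$ as the density against $\omega_{FS}^n/n!$. Your co-area/Euler derivation of $\mu_{ind}$ (giving $|\nabla_{\mathbb R}\rho|=l\sqrt{\sum_j(\partial\psi/\partial|x_j|)^2}$ on $M$) is a legitimate, and arguably cleaner, substitute for the paper's route, which instead solves $\rho=1$ for $R_0=f(R_1,\dots,R_n)$ and computes the induced metric on the graph directly. All the factors you list do appear where you say they do.

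The problem is that the one step that actually constitutes the proof is the step you defer. Your third paragraph asserts that "the radial scaling by $\psi(1,z)$ multiplies each of the $n$ complex tangent directions, contributing $\psi^{2n}$," and your fourth paragraph concedes that the differential of $\Phi$ is not block-diagonal and that the mixed terms must "reorganize exactly ... with no residual factors" --- but that reorganization is precisely the content of the lemma, and you do not verify it. In the paper this is the computation $\pi^*\tilde\omega=\frac{\sqrt{-1}}{2}\psi\sum_i d\psi\wedge(z_id\bar z_i-\bar z_idz_i)+\frac{\sqrt{-1}}{2}\psi^2\sum_i dz_i\wedge d\bar z_i$, whose $n$-th power produces $\bigl(\psi^{2n}+\psi^{2n-1}\sum_{i\ge1}r_i\,\partial\psi/\partial r_i\bigr)\prod r_i\,dr_i\,d\theta_i$, not $\psi^{2n}\prod r_i\,dr_i\,d\theta_i$; Euler's identity for the degree $-1$ homogeneous $\psi$ then collapses this to $-\psi^{2n-1}\,\partial\psi/\partial|y_0|$, and it is this factor that cancels the $|\partial\psi/\partial R_0|$ sitting in the denominator of $\mu_{ind}$ in the paper's normalization. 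Only after that cancellation does the stated $h_E$ emerge. In your co-area normalization the analogous step is the $(n+1)\times(n+1)$ Jacobian determinant of $(t,z)\mapsto t\psi(1,z)(1,z)$ in the radial variables, which does collapse cleanly to $\psi^{n+1}t^n$ (it is the determinant of a rank-one update of $\psi\,I$), so your plan is salvageable --- but as written you have named the obstacle rather than overcome it, and the correctness of the final constant and of the square-root factor hinges entirely on carrying that determinant out.
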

\begin{proof}
We denote coordinates in $\mathbb{C}^{n+1}$ by $(x_0, x_1, \ldots)$, and homogeneous coordinates on $\mathbb{CP}^n$ by $[y_0,\ldots]$. Define $z_i = \frac{y_i}{y_0}$ when $y_0 \neq 0$ and let $x_i = R_i e^{\sqrt{-1} \Theta _i}$. Let us assume that $R_0$ may be solved for as a function $R_0 = f(R_1, \ldots, R_n)$ on a domain $D$. The metric on $M$ induced from the Euclidean one on $\mathbb{C}^{n+1}$ is
\begin{eqnarray}
g &=& df \otimes df + f^2 d\Theta_0 ^2 + \sum _{i=1} ^{n} dR_i ^2 + R_i ^2 d\Theta _i ^2 \nonumber \\
&=& \frac{\partial f}{\partial R_i} \frac{\partial f}{\partial R_j} dR_i dR_j +  f^2 d\Theta_0 ^2 + \sum _{i=1} ^{n} dR_i ^2 + R_i ^2 d\Theta _i ^2.
\label{indmet}
\end{eqnarray}
It is easy to see that the corresponding volume form is
$$\mu_{ind} = f\sqrt{1+\displaystyle \sum _{i=1}^{n}\left(\frac{\partial f}{\partial R_i}\right)^2} d\Theta _0 R_1 dR_1 d\Theta _1 R_2 dR_2 d\Theta _2 \ldots . $$
 We may write this expression more invariantly as follows :
\begin{eqnarray}
\psi (f, R_1, R_2,\ldots) &=& 1 \nonumber \\
\frac{\partial \psi}{\partial R_0}\frac{\partial f}{\partial R_i} + \frac{\partial \psi}{\partial R_i} &=& 0 \nonumber \\
\mu_{ind} &=& R_0 \frac{\sqrt{\displaystyle \sum _{i=0} ^n \left(\frac{\partial \psi}{\partial R_i}\right)^2}}{\vert \frac{\partial \psi}{\partial R_0}\vert} d\Theta _0 R_1 dR_1 d\Theta _1 R_2 dR_2 d\Theta _2 \ldots
\end{eqnarray}
Thus the integral over $M$ of a continuous $S^1$-invariant function $\mathcal{F}$ is easily seen to be
\begin{gather}
 \displaystyle 2\pi \int _D \mathcal{F} R_0 \frac{\sqrt{\displaystyle \sum _{i=0} ^n \left(\frac{\partial \psi}{\partial R_i}\right)^2}}{\vert \frac{\partial \psi}{\partial R_0}\vert} R_1 dR_1 d\Theta _1 R_2 dR_2 d\Theta _2 \ldots
 \end{gather}
    Our task is reduced to calculating $\pi^{*} (R_1 dR_1 d\Theta _1 R_2 dR_2 d\Theta _2 \ldots) $ on $U_0$ which is the same as $\pi ^{*} \frac{\tilde{\omega} ^n}{n!}$ where $\tilde{\omega} = \displaystyle \frac{\sqrt{-1}}{2} \sum_{i=1}^{n} dx_i \wedge d\bar{x}_i$ on $\mathbb{C}^{n+1}$.  For $z=[1,z_{1},\dots,z_{n}]\in U_{0}$ we have
\begin{equation}
\begin{split}
\pi^{*}\tilde{\omega}&=\frac{\sqrt{-1}}{2}\sum_{i=1}^{n}\pi^{*}dx_{i}\wedge\pi^{*}d\overline{x_{i}}\\
&=\frac{\sqrt{-1}}{2}\sum_{i=1}^{n}(z_{i} d\psi+\psi dz_{i})\wedge(\overline{z_{i} d\psi+\psi dz_{i}})\\
&=\frac{\sqrt{-1}}{2}\sum_{i=1}^{n}z_{i}\overline{\psi}d\psi\wedge d\overline{z_{i}}+\frac{\sqrt{-1}}{2}\sum_{i=1}^{n}\overline{z_{i}}\psi d z_{i}\wedge d\overline{\psi}+\psi^{2}\frac{\sqrt{-1}}{2}\sum_{i=1}^{n}d z_{i}\wedge d\overline{z_{i}}\\
&=\frac{\sqrt{-1}}{2}\psi\sum_{i=1}^{n}d\psi\wedge(z_{i}d\overline{z}_{i}-\overline{z}_{i}d z_{i})+\frac{\sqrt{-1}}{2}\psi^{2}\sum_{i=1}^{n}d z_{i}\wedge d\overline{z}_{i}.
\end{split}
\end{equation}
If we use cylindrical polar coordinates $z_{j}=r_{j}e^{\sqrt{-1}\theta_{j}}$, then
\begin{equation}
z_{i}d\overline{z}_{i}-\overline{z}_{i}d z_{i}=-2\sqrt{-1}r_{i}^2d\theta_{i}.
\end{equation}
We then compute $d\psi$ :
\begin{equation}
d\psi=\displaystyle \sum_{j=1}^{n}\frac{\partial\psi}{\partial r_{j}} d r_{j}.
\end{equation}
So in cylindrical polar coordinates we have
\begin{equation}
\begin{split}
\displaystyle \pi^{*}\tilde{\omega}&=\psi\sum_{i=1}^{n}\sum_{j=1}^{n}(\frac{\partial\psi}{\partial r_{j}}r_{i})d r_{j}\wedge d\theta_{i}+\psi^{2}\sum_{i=1}^{n}r_{i}d r_{i}\wedge d\theta_{i}\\
&=\sum_{i=1}^{n}\sum_{j=1}^{n}(r_i\frac{\partial\psi}{\partial r_{j}}\psi+\delta_{ij}\psi^{2})r_{i}d r_{j}\wedge d\theta_{i},\\
\end{split}
\end{equation}
thus implying that
\begin{equation}
\begin{split}
\pi^{*}\frac{\tilde{\omega}^{n}}{n!}&=\left(\psi^{2n}+\psi^{2n-1}\sum_{i=1}^{n}r_i\frac{\partial\psi}{\partial r_{i}}\right)r_{1}dr_1 d\theta_1 r_2 dr_2 d\theta_2 \ldots\\
&=\displaystyle \left (-2^{n}\psi^{2n-1}\frac{\partial \psi (\vert y_0 \vert,\vert y_1 \vert, \ldots)}{\partial \vert y_0 \vert}|_{(1,r_1,\ldots)}\right)\left(1+|z|^{2}\right)^{n+1}\frac{\omega_{FS}^{n}}{n!},\\
\end{split}
\label{int}
\end{equation}
where the last equality follows from the homogenity of $\psi$. We want to choose $h_E$ such that
\begin{gather}
\displaystyle \int _{\mathbb{CP}^n} \pi^{*} \mathcal{F} h_E \frac{\omega _{FS} ^n}{n!} = \displaystyle \int _{\mathbb{C}^n} \pi^{*} (2\pi \int _D \mathcal{F} R_0 \frac{\sqrt{\displaystyle \sum _{i=0} ^n \left(\frac{\partial \psi}{\partial R_i}\right)^2}}{\vert \frac{\partial \psi}{\partial R_0}\vert} R_1 dR_1 d\Theta _1 R_2 dR_2 d\Theta _2 \ldots) \nonumber .\\
\end{gather}
Writing equation \ref{int} in terms of $y_0, y_1, \ldots$ and substituting in the above expression we see that
\begin{equation}
\displaystyle h_{E}([y_0,y_{1},\dots,y_{n}])=2\pi \pi^{*}(e^u)\vert y \vert ^{2n+2} (\frac{\psi^{2}(\vert y_0 \vert, \vert y_1 \vert, \ldots)}{2})^{n} \sqrt{\left (\frac{\partial \psi}{\partial \vert y_0 \vert} \right ) ^2 + \left (\frac{\partial \psi}{\partial \vert y_1 \vert} \right ) ^2  + \ldots}
\end{equation}
\end{proof}
    The function $h_{E}$ is smooth on $\mathbb{CP}^{n}$ and so it may be seen as a Hermitian metric on the trivial bundle $(E,\mathbb{CP}^{n})$. Finally we have the following useful relationship,
\begin{lemma}
The Bergman kernel $B_k$ of $(\mathbb{CP}^n,\omega_{FS},\mathcal{O}(k)\otimes E, h_k \otimes h_E)$ restricted to $\mathbb{CP}^n \times \mathbb{CP}^n$ is related to the partial Szego kernel $\Pi_k$ of $(M,\mu)$ restricted to $M\times M$ as
\begin{eqnarray}
 \pi^{*}\Pi_k(x,x) &=& \frac{B_k ([x],[x])}{h_E}.
\end{eqnarray}
\label{bergszego}
\end{lemma}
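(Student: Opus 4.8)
The plan is to write both reproducing kernels on the diagonal as sums over the monomial basis and match them term by term, converting pointwise norms via Lemma \ref{05} and $L^2$ norms via Lemma \ref{07}. First I would recall that the holomorphic sections of $\mathcal{O}(k)\otimes E$ over $\mathbb{CP}^n$ are spanned by the degree-$k$ monomials $e_\alpha$ with $|\alpha|=k$; since $E$ is trivial, tensoring by it leaves the space of sections unchanged. Under $\pi^*$ these correspond exactly to the degree-$k$ monomials $x^\alpha$ that span $H_k(M,\mu)$ and appear in \eqref{03}.

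Next I would record the two ingredients of the Bergman kernel for each basis element. For the pointwise norm, Lemma \ref{05} gives $\|e_\alpha\|_{h^k}^2=\pi^*|x^\alpha|^2$; because $E$ is the trivial bundle with metric $h_E$, the pointwise norm of $e_\alpha$ as a section of $\mathcal{O}(k)\otimes E$ is $\|e_\alpha\|_{h^k\otimes h_E}^2=\pi^*|x^\alpha|^2\cdot h_E$. For the global $L^2$ norm, the function $|x^\alpha|^2$ is $S^1$-invariant, so Lemma \ref{07} applied with $\mathcal{F}=|x^\alpha|^2$ yields $\|e_\alpha\|_{L^2}^2=\int_{\mathbb{CP}^n}\pi^*(|x^\alpha|^2)\,h_E\,\frac{\omega_{FS}^n}{n!}=\int_M|x^\alpha|^2\,\mu=\langle x^\alpha,x^\alpha\rangle_\mu$.

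With these in hand I would assemble the diagonal Bergman kernel from the orthonormal basis $e_\alpha/\|e_\alpha\|_{L^2}$, so that $B_k([x],[x])=\sum_{|\alpha|=k}\|e_\alpha\|_{h^k\otimes h_E}^2/\|e_\alpha\|_{L^2}^2=h_E\sum_{|\alpha|=k}\pi^*|x^\alpha|^2/\langle x^\alpha,x^\alpha\rangle_\mu$. The remaining sum is precisely $\pi^*\Pi_k(x,x)$ by \eqref{03}, and dividing by $h_E$ gives the asserted identity.

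The step needing the most care, and the main obstacle, is justifying that each kernel really is the diagonal sum over the monomials, i.e. the orthogonality of the basis on both sides. On the Szeg\"o side this is the stated fact that the monomials form an orthogonal basis of $H_k(M,\mu)$, which comes from the full torus invariance of the Reinhardt measure $\mu$. On the Bergman side I would verify the analogous $L^2$-orthogonality of the $e_\alpha$ by applying Lemma \ref{07} to $x^\alpha\overline{x^\beta}$ (which is invariant under the diagonal $S^1$ since $|\alpha|=|\beta|=k$): this shows the $\mathbb{CP}^n$ inner product of $e_\alpha$ and $e_\beta$ equals $\int_M x^\alpha\overline{x^\beta}\,\mu$, which vanishes for $\alpha\neq\beta$ by the same invariance. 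Thus $\pi^*$ is an isometry carrying orthogonal basis to orthogonal basis, and the only bookkeeping subtlety is keeping $h_E$ attached to the pointwise norm rather than the $L^2$ norm, which is exactly what produces the factor $1/h_E$ in the final formula.
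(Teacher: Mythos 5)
Your proposal is correct and follows essentially the same route as the paper: expand $B_k$ on the diagonal over the monomial basis, use Lemma \ref{05} for the pointwise norms and Lemma \ref{07} for the $L^2$ norms, and recognize the resulting sum as $\pi^*\Pi_k(x,x)\,h_E$. The extra verification of $L^2$-orthogonality of the $e_\alpha$ via Lemma \ref{07} applied to $x^\alpha\overline{x^\beta}$ is a welcome elaboration of a point the paper leaves implicit, but it does not change the argument.
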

\emph{Proof}: The Bergman kernel is
\begin{gather}
B_k ([x],[x]) = \displaystyle \sum _{\vert \alpha \vert =k} \frac{\Vert e_{\alpha} \Vert_{h_k \otimes h_E} ^2}{\int _{\mathbb{CP}^n}\Vert e_{\alpha} \Vert_{h_k \otimes h_E} ^2 \frac{\omega_{FS} ^n}{n!}} \nonumber \\
=\sum _{\vert \alpha \vert =k} \frac{\pi^{*}(\vert x^{\alpha} \vert^2) h_E}{\langle x^{\alpha}, x^{\alpha} \rangle _{\mu}} \nonumber \\
= \pi^{*} \Pi_k (x,x) h_E .\nonumber
\end{gather}
\subsection{Curvature of the Hermitian metrics}
\indent Here we compute the curvature of the Hermitian metric $h$. For the sake of brevity we denote
\begin{equation}
\rho_{1}=\frac{\partial\rho}{\partial z_{1}},\dots,\rho_{n}=\frac{\partial\rho}{\partial z_{n}},\: \rho_{i\bar{j}}=\frac{\partial^{2}\rho}{\partial z_{i}\partial\bar{z}_{j}},
\end{equation}
\begin{equation}
A=(\rho\rho_{i\bar{j}})_{1\leq i,j\leq n}\:\:\text{, and a vector}\:\: v=(\overline{\rho}_{1},\dots,\overline{\rho}_{n}).
\end{equation}
Therefore the curvature of $h$ at the point $[1,z_{1},\dots,z_{n}]\in U_{0}$ is
\begin{equation}
\begin{split}
\Theta_{h}&=-\partial\bar{\partial}\ln h = -2\partial\bar{\partial}(\psi(1,z_{1},\dots,z_{n}))\\
&=\frac{2}{l}\partial\bar{\partial}\ln \rho(1,z_{1},\dots,z_{n})=\frac{2}{l}\sum(\frac{\rho\rho_{i\bar{j}}-\rho_{i}\rho_{\bar{j}}}{\rho^{2}})dz_{i}\wedge d\bar{z}_{j}.\\
\end{split}
\label{curv}
\end{equation}
The preceding expression for $\Theta_h$ may be used to prove its positivity. Indeed,
\begin{lemma}
The curvature of the Hermitian metric $h$ on the hyperplane section bundle $(O(1),\mathbb{CP}^{n})$ is positive.
\label{c6}
\end{lemma}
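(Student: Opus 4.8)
The plan is to show that the Hermitian matrix of coefficients in the expression (\ref{curv}) for $\Theta_h$, namely
\[
\left(\frac{\rho\rho_{i\bar j}-\rho_i\rho_{\bar j}}{\rho^2}\right)_{1\le i,j\le n}\quad\text{at }(1,z_1,\dots,z_n),
\]
is positive definite; since $l>0$ this immediately yields $\Theta_h>0$. The key observation is that this $n\times n$ matrix is exactly the restriction of the full complex Hessian of $\ln\rho$ on $\mathbb{C}^{n+1}\setminus\{0\}$ to the subspace $\{\xi_0=0\}$: for $\xi=(0,\xi_1,\dots,\xi_n)$ only the indices $i,j\ge 1$ survive in $\sum_{i,j=0}^n(\ln\rho)_{i\bar j}\xi_i\bar\xi_j$. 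I would therefore study the $(n+1)\times(n+1)$ Hermitian form $Q(\xi)=\sum_{i,j=0}^n(\ln\rho)_{i\bar j}\xi_i\bar\xi_j$, where $(\ln\rho)_{i\bar j}=\frac{\rho_{i\bar j}}{\rho}-\frac{\rho_i\rho_{\bar j}}{\rho^2}$, and reduce the lemma to a linear-algebra statement about $Q$.

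First I would extract the consequences of homogeneity. Since $\rho=f(|x_0|,\dots,|x_n|)$ with $f$ homogeneous of degree $l$, we have $\rho(\lambda x)=|\lambda|^l\rho(x)$ for $\lambda\in\mathbb{C}^\ast$. Differentiating the $\mathbb{R}_+$-scaling at $t=1$ and the $S^1$-action $x\mapsto e^{\sqrt{-1}\theta}x$ at $\theta=0$, and using that $\rho$ is real, gives the two Euler identities
\[
\sum_{i=0}^n x_i\rho_i=\tfrac{l}{2}\rho,\qquad \sum_{i=0}^n x_i\rho_{i\bar j}=\tfrac{l}{2}\rho_{\bar j}\quad(0\le j\le n),
\]
the second obtained by differentiating the first in $\bar x_j$. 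Substituting these into $Q$ shows that the Euler (radial) vector $x=(x_0,\dots,x_n)$ lies in the radical of $Q$, i.e. $\sum_i x_i(\ln\rho)_{i\bar j}=0$ for every $j$. This is the analytic incarnation of the fact that $\partial\bar\partial\ln\rho$ descends from $\mathbb{C}^{n+1}\setminus\{0\}$ to $\mathbb{CP}^n$, and it is exactly what makes the matrix in (\ref{curv}) well defined on the chart.

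Next I would establish strict positivity on a complement of the radial line. Writing $\ell(\xi)=\sum_i\rho_i\xi_i$ for the linear functional whose kernel is the holomorphic tangent space $T=\ker\partial\rho$, the first Euler identity gives $\ell(x)=\tfrac{l}{2}\rho\neq 0$, so $x\notin\ker\ell$ and $\mathbb{C}^{n+1}=\mathbb{C}x\oplus\ker\ell$. On $\ker\ell$ the rank-one term drops out and $Q(\eta)=\tfrac{1}{\rho}\sum_{i,j}\rho_{i\bar j}\eta_i\bar\eta_j$, which is strictly positive for $\eta\neq0$ because $\rho>0$ and the Levi form of $\rho$ is positive definite on $T$ (this is the strict pseudoconvexity of $\Omega$; the strict plurisubharmonicity of $\rho$ invoked earlier is more than enough). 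Hence $Q$ is positive semidefinite with radical exactly $\mathbb{C}x$. Finally, since the chart subspace $\{\xi_0=0\}$ meets $\mathbb{C}x$ only at $0$ (because $x_0=1\neq0$), the restriction of $Q$ to $\{\xi_0=0\}$ is positive definite; but that restriction is precisely the matrix in (\ref{curv}), so $\Theta_h>0$.

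The step I expect to require the most care is the linear-algebra reduction in the last paragraph: one must verify cleanly that semidefiniteness of $Q$, together with the identification of its radical with $\mathbb{C}x$ and the transversality $\{\xi_0=0\}\cap\mathbb{C}x=\{0\}$, forces strict positivity on the chart. One must also be careful to pin down exactly which positivity hypothesis on $\rho$ is used, namely Levi positivity of $\rho$ on $T$ (which is in fact \emph{equivalent} to the asserted curvature positivity) rather than full strict plurisubharmonicity. The homogeneity computations themselves are routine once the two Euler identities are in hand.
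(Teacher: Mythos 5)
Your argument is correct, but it is organized differently from the paper's. The paper never leaves the level of $\partial_x\bar\partial_x\rho$: writing $\rho=\vert x_0\vert^l\rho(1,z_1,\dots,z_n)$ it expands the full Hessian against a test vector $(a,\vec w)$ as
\begin{equation*}
\vert x_0\vert^{l-2}\Bigl\vert al+\tfrac{x_0}{\rho}\textstyle\sum_i\rho_i w_i\Bigr\vert^2
+\vert x_0\vert^{l}\Bigl(\partial_z\bar\partial_z\rho(\vec w,\overline{\vec w})-\tfrac{1}{\rho}\bigl\vert\textstyle\sum_i\rho_i w_i\bigr\vert^2\Bigr),
\end{equation*}
and then chooses the free $x_0$-component $a$ to annihilate the square, so that strict plurisubharmonicity forces the bracket---which by \eqref{curv} is $\tfrac{l}{2}\rho^{}$ times the curvature form applied to $\vec w$---to be positive. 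You instead pass to $\partial\bar\partial\ln\rho$ on all of $\mathbb{C}^{n+1}\setminus\{0\}$, use the Euler identities to place the radial line $\mathbb{C}x$ in its radical, get semidefiniteness from the splitting $\mathbb{C}x\oplus\ker\partial\rho$, and finish by transversality of $\{\xi_0=0\}$ to $\mathbb{C}x$. The two proofs eliminate different directions: the paper kills the $x_0$-direction by optimizing over $a$, while you kill the radial direction that homogeneity hands you for free. Your version isolates the weaker (and in fact equivalent) hypothesis of Levi positivity on $T=\ker\partial\rho$ rather than full strict plurisubharmonicity, and it makes transparent why $\partial\bar\partial\ln\rho$ descends to $\mathbb{CP}^n$; the paper's is a shorter direct computation. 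If you write yours up, the only points to spell out are (i) the one-line chain-rule identification of the matrix in \eqref{curv} with the $\{\xi_0=0\}$-block of the $x$-Hessian of $\ln\rho$ at $(1,z_1,\dots,z_n)$, and (ii) that Levi positivity on $\ker\partial\rho$ holds at every point of $\mathbb{C}^{n+1}\setminus\{0\}$ and not just on $M$, which again follows from homogeneity since every level set is a dilate of $M$.
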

\begin{proof}
\indent We use the plurisubharmonicity of the defining function $\rho$ in the following form:
\begin{eqnarray}
\partial_x \bar{\partial _x} \rho ((a,\vec{w}), (\bar{a},\overline{\vec{w}})) >0 ,\label{psh}
\end{eqnarray}
for every $a \in \mathbb{C}$ and $\vec{w} \in \mathbb{C}^n$. Notice that
\begin{eqnarray}
\partial_x \bar{\partial _x} \rho &=& \frac{\partial ^2}{\partial x_0 \partial \bar{x}_0}(\vert x_0 \vert^l \rho(1,z_1,\ldots))dx_0 \wedge d\bar{x}_0 + \frac{\partial ^2}{\partial x_0 \partial \bar{z}_i}(\vert x_0 \vert^l \rho(1,z_1,\ldots))dx_0 \wedge d\bar{z}_i \nonumber \\ &+& \frac{\partial ^2}{\partial z_i \partial \bar{x}_0}(\vert x_0 \vert^l \rho(1,z_1,\ldots))dz_i \wedge d\bar{x}_0 + \vert x_0 \vert^l \partial_z \bar{\partial}_z \rho.  \nonumber \\
\partial_x \bar{\partial _x} \rho ((a,\vec{w}), (\bar{a},\overline{\vec{w}})) &=& \vert x_0 \vert ^{l-2} \displaystyle \left | al + \frac{x_o}{\rho} \sum_i \frac{\partial \rho}{\partial z_i} w_i \right | ^2 + \vert x_0 \vert ^l \partial _z \bar{\partial}_z \rho (\vec{w},\overline{\vec{w}})-\frac{\vert x_0 \vert^{l}}{\rho} \left |\sum_i \frac{\partial \rho}{\partial z_i} w_i \right | ^2 .\nonumber
\end{eqnarray}
Choosing $a=\frac{-1}{l} \sum \frac{X_0}{\rho}\frac{\partial \rho}{\partial z_i} w_i$, and using expressions \ref{curv} and \ref{psh}, we see that the curvature $\Theta_h$ is positive.
\end{proof}
The following computation is useful :
\begin{lemma}
The determinant of the curvature matrix on the coordinate chart $U_0$ is
$$\det(\Theta_{h})=(\frac{2}{l\rho^{2}})^{n}\det(A)(1-vA^{-1}v^{*}).$$
\end{lemma}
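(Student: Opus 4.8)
The plan is to treat $\det(\Theta_h)$ as the determinant of an explicit $n\times n$ matrix read off from the curvature formula \eqref{curv}, to recognize that matrix as a scalar multiple of a rank-one perturbation of $A$, and then to evaluate the determinant by the matrix determinant lemma (the rank-one update formula).

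First I would extract the Hermitian matrix of $\Theta_h$. On $U_0$ equation \eqref{curv} gives $\Theta_h=\frac{2}{l}\sum_{i,j}\frac{\rho\rho_{i\bar j}-\rho_i\rho_{\bar j}}{\rho^2}\,dz_i\wedge d\bar z_j$, so the representing matrix is $\frac{2}{l\rho^2}\big(\rho\rho_{i\bar j}-\rho_i\rho_{\bar j}\big)_{1\le i,j\le n}$. Pulling the scalar $\frac{2}{l\rho^2}$ out of an $n\times n$ determinant contributes the factor $\big(\frac{2}{l\rho^2}\big)^n$, reducing the task to computing $\det\!\big(A-(\rho_i\rho_{\bar j})_{ij}\big)$, where $A=(\rho\rho_{i\bar j})$ as defined above.

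Next I would identify the subtracted matrix as a rank-one term. Since $\rho$ is real we have $\rho_{\bar j}=\overline{\rho_j}$, so $(\rho_i\rho_{\bar j})_{ij}=(\rho_i\overline{\rho_j})_{ij}$. Viewing $v=(\overline{\rho_1},\dots,\overline{\rho_n})$ as a row vector, its conjugate transpose $v^*$ is the column vector $(\rho_1,\dots,\rho_n)^{T}$, and comparing entries shows $(v^* v)_{ij}=\rho_i\overline{\rho_j}$. Hence the representing matrix of $\Theta_h$ is exactly $\frac{2}{l\rho^2}\,(A-v^* v)$, a rank-one modification of $A$.

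Finally I would apply the matrix determinant lemma $\det(A+u w)=\det(A)\,(1+w A^{-1}u)$ with the column vector $u=-v^*$ and the row vector $w=v$, which yields $\det(A-v^* v)=\det(A)\,(1-v A^{-1}v^*)$ and hence $\det(\Theta_h)=\big(\frac{2}{l\rho^2}\big)^n\det(A)\,(1-v A^{-1}v^*)$, as claimed. This step requires $A$ to be invertible, which holds because $A$ is positive definite: $\rho>0$ on $M$ while the complex Hessian $(\rho_{i\bar j})_{1\le i,j\le n}$ of the restriction $\rho(1,z_1,\dots,z_n)$ is positive definite by the strict plurisubharmonicity of $\rho$ (the same property exploited in Lemma \ref{c6}). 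The only genuine obstacle is the conjugate-bookkeeping: one must be careful to identify $(\rho_i\rho_{\bar j})$ with $v^* v$ rather than $v v^*$, and to place $v$ and $v^*$ in the update formula so that the scalar correction emerges as $1-v A^{-1}v^*$ and not its conjugate. After that the result is a routine instance of a standard determinant identity.
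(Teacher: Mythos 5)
Your proof is correct and follows essentially the same route as the paper: both identify $\frac{l\rho^{2}}{2}\Theta_h$ with the rank-one perturbation $A-v^{*}v$ and evaluate the determinant via the rank-one update identity. The only difference is that you invoke the matrix determinant lemma as a known fact (and justify invertibility of $A$ via strict plurisubharmonicity), whereas the paper derives the same identity inline by column multilinearity and Cramer's rule.
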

\begin{proof}
Recall that
\begin{equation}
\frac{l}{2}(\rho^{2}\Theta_{h})_{j\bar{j}}=(\rho\rho_{i\bar{j}}-\rho_{i}\rho_{\bar{j}}).
\end{equation}
 If we let $A_{i}$ be the $i$-th column of the matrix $A$ then
\begin{equation}
\frac{l}{2}\rho^{2}\Theta_{h}=(A_{1}-\rho_{1}v^*,\dots,A_{n}-\rho_{n}v^*),
\end{equation}
Hence
\begin{equation}
\begin{split}
\frac{l^{n}\rho^{2n}}{2^{n}}\det(\Theta_{h})&=\det(A)-\rho_{1}\det(v^{*},A_{2},\dots,A_{n})-\dots-\rho_{n}\det(A_{1},\dots,v^{*})\\
&=\det(A)(1-\sum_{i=1}^{n}\rho_{i}\frac{\det(\tilde{A_{i}})}{\det(A)})\:\text{Cramer's rule},\\
&=\det(A)(1-vA^{-1}v^{*}).\\
\end{split}
\end{equation}
Therefore
\begin{equation}
\det(\Theta_{h})=(\frac{2}{l\rho^{2}})^{n}\det(A)(1-vA^{-1}v^{*}).
\label{1}
\end{equation}
\end{proof}
It is easy to see that
\begin{prop}
If $f(\vert x_0 \vert, \vert x_1 \vert,\ldots)$ is a homogeneous function of order $l$,
\begin{eqnarray}
\frac{\vert x_0 \vert^l}{x_0}f _{i} &=& \frac{\partial f}{\partial x_i}, \nonumber \\
\frac{\vert x_0 \vert^l}{\bar{x_0}}f _{\bar{i}} &=& \frac{\partial f}{\partial \bar{x}_i}, \nonumber \\
\vert x_0 \vert^{l-2} f_{i\bar{j}} &=& \frac{\partial^2 f}{\partial x_i \partial \bar{x}_j}. \nonumber
\end{eqnarray}
\label{xcood}
\end{prop}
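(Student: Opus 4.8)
The plan is to prove all three identities by a single, direct application of the chain rule, exploiting the product structure that a homogeneous modulus-function acquires on the chart $U_0$. On $U_0$ I would work in the affine coordinates $z_i = x_i/x_0$ ($1\le i\le n$), in which the subscript notation $f_i = \partial f/\partial z_i$, $f_{\bar i}=\partial f/\partial \bar z_i$, $f_{i\bar j}=\partial^2 f/\partial z_i\partial\bar z_j$ (introduced earlier for $\rho$) is understood. The starting point is that, since $f$ depends only on the moduli $\vert x_0\vert,\vert x_1\vert,\ldots$ and is homogeneous of order $l$, it factors as
\begin{equation*}
f(x_0,x_1,\ldots,x_n)=\vert x_0\vert^{l}\,\tilde f(z,\bar z),\qquad \tilde f(z,\bar z)=f(1,z_1,\ldots,z_n),
\end{equation*}
because $\vert x_i/x_0\vert=\vert z_i\vert$. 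All three claims then reduce to differentiating this one expression.

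The key computational steps are elementary. First I would record that for $i,j\ge 1$ one has $\partial z_j/\partial x_i=\delta_{ij}/x_0$ and, crucially, $\partial\bar z_j/\partial x_i=0$, together with their conjugates. For the first identity I differentiate $f=\vert x_0\vert^l\tilde f$ in $x_i$: the modulus factor $\vert x_0\vert^l$ carries no $x_i$-dependence when $i\ge 1$, the antiholomorphic chain-rule terms vanish by $\partial\bar z_j/\partial x_i=0$, and only $\vert x_0\vert^l f_i/x_0$ survives, which is exactly $(\vert x_0\vert^l/x_0)f_i$. The second identity is the complex-conjugate statement and follows identically. For the third I differentiate the first identity once more in $\bar x_j$: the prefactor $\vert x_0\vert^l/x_0$ is annihilated by $\partial/\partial\bar x_j$ for $j\ge 1$, and the remaining term contributes $\vert x_0\vert^l f_{i\bar j}/(x_0\bar x_0)$; using $x_0\bar x_0=\vert x_0\vert^2$ this is $\vert x_0\vert^{l-2}f_{i\bar j}$.

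The only thing requiring care, and what I expect to be the sole (minor) obstacle, is the bookkeeping of holomorphic versus antiholomorphic dependence. One must verify that the antiholomorphic chain-rule terms genuinely drop, i.e.\ that $\partial\bar z_j/\partial x_i=0$, and that the modulus prefactor $\vert x_0\vert^l$ is inert under $\partial/\partial x_i$ and $\partial/\partial\bar x_i$ for $i\ge 1$. This inertness is precisely why the statement is confined to the chart indices $i,j\ge 1$: differentiating in $x_0$ would instead produce extra Euler-type terms, since $z$ itself depends on $x_0$, but such terms never enter here. With this bookkeeping in place the three identities are immediate.
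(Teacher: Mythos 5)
Your proof is correct and is exactly the computation the paper intends (the paper itself omits it, prefacing the proposition only with ``It is easy to see that''): factor $f=\vert x_0\vert^l\,\tilde f(z,\bar z)$ by homogeneity and apply the chain rule, using that $\partial\bar z_j/\partial x_i=0$ and that $\vert x_0\vert^l$ and $\vert x_0\vert^l/x_0$ are inert under $\partial/\partial x_i$ and $\partial/\partial\bar x_j$ for $i,j\ge 1$. No issues.
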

We then rewrite equation \eqref{1}  in the x-coordinates :
\begin{lemma}
The determinant of the curvature matrix on $U_0$ of the Hermitian metric $h$ in the x-coordinates is
\begin{equation}
\det(\Theta_{h})=\left(\frac{2}{l}\right)^{n+2} \left(\frac{\vert x_0 \vert^2}{\rho}\right)^{n+1} \det(H(\rho)).
\end{equation}
\label{3}
\end{lemma}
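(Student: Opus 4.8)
The plan is to begin from equation \eqref{1}, which expresses $\det(\Theta_h)$ on $U_0$ through the quantity $\det(A)(1-vA^{-1}v^{*})$ assembled from the $z$-coordinate data $\rho=\rho(1,z)$, $\rho_i$ and $\rho_{i\bar j}$, and to re-express this quantity through the full complex Hessian $H(\rho)$ in the $x$-coordinates. Since $\Theta_h$ is an intrinsic object on $\mathbb{CP}^n$ and the claimed right-hand side is invariant under the scaling $x\mapsto tx$ (by homogeneity $\det(H(\rho))$ scales like $|t|^{(l-2)(n+1)}$ while $(|x_0|^2/\rho)^{n+1}$ scales like $|t|^{(2-l)(n+1)}$), it suffices to verify the identity at the representative point $x=(1,z_1,\dots,z_n)$, where $|x_0|=1$ and $\rho(x)=\rho(1,z)$. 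Reducing to $x_0=1$ first is the cleanest route, as it removes all of the $x_0$ and $|x_0|$ bookkeeping from the computation.

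First I would compute the entries of the $(n+1)\times(n+1)$ matrix $H(\rho)$ at this point purely in terms of the $z$-data. Proposition \ref{xcood} gives the lower block $H_0(\rho)=(\rho_{i\bar j})_{1\le i,j\le n}=A/\rho$ directly. For the border indexed by $0$ I would use the homogeneity of $\rho$, i.e.\ Euler's identity $\sum_i x_i\,\partial_{x_i}\rho=\tfrac{l}{2}\rho$ together with its two derivatives $\sum_i x_i\,\partial_{x_i}\partial_{\bar x_j}\rho=\tfrac{l}{2}\partial_{\bar x_j}\rho$ and $\sum_j \bar x_j\,\partial_{x_i}\partial_{\bar x_j}\rho=\tfrac{l}{2}\partial_{x_i}\rho$; combined with Proposition \ref{xcood} these express the $0$-th row, the $0$-th column and the $(0,0)$-entry of $H(\rho)$ in terms of $\rho_i$, $\rho_{i\bar j}$ and $\rho$.

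The key algebraic step is then to perform the column operation $C_0\mapsto C_0+\sum_{j\ge 1}\bar z_j C_j$ followed by the row operation $R_0\mapsto R_0+\sum_{i\ge1} z_i R_i$. These operations do not change the determinant and, by the two differentiated Euler relations, they collapse the border to $\tfrac{l}{2}v$ (the $0$-th row), $\tfrac{l}{2}v^{*}$ (the $0$-th column) and $\tfrac{l^2}{4}\rho$ (the $(0,0)$-entry), leaving the lower block $A/\rho$ untouched, so that
\[
\det(H(\rho))=\det\begin{pmatrix}\tfrac{l^2}{4}\rho & \tfrac{l}{2}v\\ \tfrac{l}{2}v^{*} & \tfrac{1}{\rho}A\end{pmatrix}.
\]
Pulling the factor $\tfrac{l}{2}$ out of the $0$-th row and of the $0$-th column and then taking the Schur complement of the scalar $(0,0)$-block reduces this to $\tfrac{l^2}{4}\rho^{-(n-1)}\det(A-v^{*}v)$, and the matrix determinant lemma $\det(A-v^{*}v)=\det(A)(1-vA^{-1}v^{*})$ gives $\det(H(\rho))=\tfrac{l^2}{4}\rho^{-(n-1)}\det(A)(1-vA^{-1}v^{*})$. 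Substituting this into \eqref{1} and collecting powers of $\rho$ together with the constant $\left(\tfrac{2}{l}\right)^{n}\cdot\tfrac{4}{l^2}=\left(\tfrac{2}{l}\right)^{n+2}$ yields the claim at $x_0=1$, and hence everywhere by the scaling invariance noted above.

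The main obstacle is purely organizational rather than conceptual: keeping track of the scalar factors of $l/2$, the powers of $\rho$, and — if one does not first reduce to $x_0=1$ — the powers of $x_0$ and $|x_0|$ introduced when converting $z$-derivatives to $x$-derivatives through Proposition \ref{xcood}. One must also notice that the bordered matrix above is \emph{not} a uniform scalar multiple of $\begin{pmatrix}1 & v\\ v^{*} & A\end{pmatrix}$, so the passage to $\det(A)(1-vA^{-1}v^{*})$ has to go through a Schur complement and the matrix determinant lemma rather than through naive row or column scaling.
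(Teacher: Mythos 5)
Your proof is correct and follows essentially the same route as the paper: both use the differentiated Euler identities to collapse the border of the full Hessian $H(\rho)$, reduce to the rank-one determinant $\det(A)(1-vA^{-1}v^{*})$, and combine with equation \eqref{1}. Your preliminary reduction to $x_0=1$ by scaling invariance and your use of a Schur complement plus the matrix determinant lemma, in place of the paper's explicit $|x_0|$ bookkeeping and its block factorization $\det(A+B)=\det(A)\det(I+A^{-1}B)$, are only presentational simplifications of the same computation.
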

\begin{proof}
Using proposition \ref{xcood} we may write $A$ and $v$ in the x-coordinates. That is,
\begin{equation}
A=\rho(\rho_{i\bar{j}})=\frac{\rho(x)}{|x_{0}|^{2l-2}}H_{0}(\rho),
\end{equation}
and
\begin{equation}
v=\frac{1}{|x_{0}|^{l-1}}\nabla_{0}\rho.
\end{equation}
Then
$$vA^{-1}v^{*}=\frac{1}{\rho(x)}(\nabla_{0}\rho) H_{0}^{-1}(\rho)(\nabla_{0}\rho)^{*}.$$
So we have
\begin{equation}
\begin{split}
\det(\Theta_{h})&=\left(\frac{2}{l\rho^{2}}\right)^{n}\det(A)(1-vA^{-1}v^{*})\\
&=\left(2\frac{|x_{0}|^{2l}}{l\rho^{2}(x)}\right)^{n}\left(\frac{\rho(x)}{|x_{0}|^{2l-2}}\right)^{n}\det(H_{0}(\rho))(1-\frac{1}{\rho(x)}(\nabla_{0}\rho) H_{0}^{-1}(\rho)(\nabla_{0}\rho)^{*})\\
&=\left(\frac{2|x_{0}|^{2}}{l\rho(x)}\right)^{n}\det(H_{0}(\rho))(1-\frac{1}{\rho(x)}(\nabla_{0}\rho) H_{0}^{-1}(\rho)(\nabla_{0}\rho)^{*})\\
&=\left(\frac{2|x_{0}|^{2}}{l\rho}\right)^{n}\frac{1}{\rho}\det(H_{0}(\rho))(\rho-(\nabla_{0}\rho) H_{0}^{-1}(\rho)(\nabla_{0}\rho)^{*}).\\
\end{split}
\label{interm}
\end{equation}
We then proceed to write $\det(H(\rho))$ in terms of $H_0 (\rho)$. To do this we use the homogenity of $\rho$ in the form
\begin{eqnarray}
\displaystyle \sum _{i=1} ^n x_i \frac{\partial \rho}{\partial x_i} + x_0\frac{\partial \rho}{\partial x_0} &=& \frac{l\rho}{2}, \nonumber \\
\displaystyle \sum _{i=1} ^n \bar{x_i} \frac{\partial \rho}{\partial \bar{x_i}} + \bar{x_0}\frac{\partial \rho}{\partial \bar{x_0}} &=& \frac{l\rho}{2}.
\label{hom}
\end{eqnarray}
Differentiating the first equation in \ref{hom} with respect to $\bar{x_j}$ we see that
\begin{gather}
\displaystyle \sum _{i=1} ^{n} x_i \frac{\partial^2 \rho}{\partial x_i \partial \bar{x_j}} + x_0 \frac{\partial^{2}\rho}{\partial x_0 \partial \bar{x_j}} = \frac{l}{2} \frac{\partial \rho}{\partial \bar{x_j}} ,\nonumber \\
\frac{\partial ^2 \rho}{\partial x_0 \partial \bar{x_0}} = \frac{l^2 \rho}{4 \vert x_0 \vert^2} - \frac{l}{2\vert x_0 \vert^2} \sum _{i=1} ^{n} \bar{x_i} \frac{\partial \rho}{\partial \bar{x_i}} - \frac{x_i}{x_0} \frac{\partial^2 \rho}{\partial x_i \partial \bar{x_0}}.
\label{sec}
\end{gather}
So by using the equation \ref{sec} we have:
\begin{equation}
\frac{\partial ^{2}\rho}{\partial x_0 \partial \bar{x_j}}=\frac{l}{2x_0} \frac{\partial \rho}{\partial \bar{x_j}}- \sum _{i=1} ^n \frac{x_i}{x_0} \frac{\partial ^2 \rho}{\partial x_i \partial \bar{x_j}}.
\label{hessr}
\end{equation}
The equations \ref{sec}, \ref{hessr} imply that the determinant of the matrix $H(\rho)$ is (using row operations)
\begin{gather}
\left | \begin{array}{cc}
(\frac{l^2 \rho}{4 \vert x_0 \vert^2} - \frac{l}{2\vert x_0 \vert^2} \sum _{i=1} ^{n} \bar{x_i} \frac{\partial \rho}{\partial \bar{x_i}} - \sum _{i=1} ^{n} \frac{x_i}{x_0} \frac{\partial^2 \rho}{\partial x_i \partial \bar{x_0}})  & \dots  (\frac{l}{2x_0} \frac{\partial \rho}{\partial \bar{x_n}} - \sum _{i=1} ^n \frac{x_i}{x_0} \frac{\partial ^2 \rho}{\partial x_i \partial \bar{x_n}}) \\
\displaystyle\vdots \\ (\frac{\partial ^2 \rho} {\partial x_n \partial \bar{x_0}}) & \displaystyle H_0\\
\end{array} \right | \nonumber \\ \\
= \left |
\begin{array}{cc}
 (\frac{l^2\rho}{4 \vert x_0 \vert^2} - \frac{l}{2\vert x_0 \vert^2} \sum _{i=1} ^{n} \bar{x_i} \frac{\partial \rho}{\partial \bar{x_i}})  &\dots  \frac{l}{2x_0} \frac{\partial \rho}{\partial \bar{x_n}} \\ \vdots\\
\frac{l}{2\bar{x_0}} \frac{\partial \rho}{\partial x_n} -   \sum _{i=1} ^n \frac{\bar{x_i}}{\bar{x_0}} \frac{\partial ^2 \rho}{\partial \bar{x_i} \partial x_n} & \displaystyle H_0
\end{array} \right |
= \frac{l}{2x_0}\left |
\begin{array}{cc}
 (\frac{l\rho}{2 \bar{x_0}} - \frac{1}{\bar{x_0}} \sum _{i=1} ^{n} \bar{x_i} \frac{\partial \rho}{\partial \bar{x_i}})  &\dots  \frac{\partial \rho}{\partial \bar{x_n}} \\ \vdots\\
\frac{l}{2\bar{x_0}} \frac{\partial \rho}{\partial x_n} -   \sum _{i=1} ^n \frac{\bar{x_i}}{\bar{x_0}} \frac{\partial ^2 \rho}{\partial \bar{x_i} \partial x_n} & \displaystyle H_0
\end{array} \right |. \nonumber \\
=  \frac{l}{4 \vert x_0 \vert^2}\left |
\begin{array}{cc}
 \rho  &\dots  \frac{\partial \rho}{\partial \bar{x_n}} \\ \vdots\\
 \frac{\partial \rho}{\partial x_n}  & \displaystyle H_0
\end{array} \right |. \nonumber
\end{gather}
At this point we split the determinant as
\begin{gather}
\frac{l}{4\vert x_0 \vert^2} \det(A + B)
= \frac{l}{4\vert x_0 \vert^2} \det(A) \det(I + A^{-1}B), \nonumber
\end{gather}
where the matrices $A$ and $B$ are
\begin{gather}
A =  \left [ \begin{array}{cc}
\rho & 0 \\
0 & H_0
\end{array} \right ],\nonumber \\
B = \left [ \begin{array}{cc}
0 &\dots \frac{\partial \rho}{\partial \bar{x_n}}  \\ \vdots\\
\frac{\partial \rho}{\partial x_n} & \displaystyle 0_{n\times n}
\end{array}
\right ].\nonumber
\end{gather}
Hence the determinant equals $\frac{l^2}{4\vert x_0 \vert^2} \det(H_0) \det(I+A^{-1}B)$ which is easily evaluated to be
$$\frac{l^2}{4\vert x_0 \vert^2} \det(H_0) (\rho-(\nabla_{0}\rho) H_{0}^{-1}(\rho)(\nabla_{0}\rho)^{*}).$$
 This in conjunction with equation \ref{interm} proves the lemma.
\end{proof}

\subsection{Terms of the asymptotic expansion}
Finally we may prove theorem \ref{main} which is stated once again for the reader's convenience :
\begin{theorem}
If $x=(x_{0},\dots,x_{n})\in\Omega$ where $\Omega=\rho^{-1}[0,1)$ is defined earlier, then the first two terms of the asymptotic expansion of the reproducing kernel of the projection map
\begin{equation}
\Pi_{k}:L^{2}(\Omega,\mu)\longrightarrow H_{k}(M),
\end{equation}
are
\begin{equation}
a_{0}(x,x)=
\left(\frac{2}{l}\right)^{n+2}\frac{\det(H(\rho))}{2\pi^{n+1}e^{u(x\psi(x))}\psi^{2n-l(n+1)}\sqrt{\left(\frac{\partial \psi}{\partial \vert x_0 \vert}\right)^2 + \left(\frac{\partial \psi}{\partial \vert x_1 \vert}\right)^2+\ldots}}, \nonumber
\end{equation}
and
\begin{gather}
a_1 (x,x)= \frac{a_0}{4} \left( 2n(n+1) + 2\displaystyle  \sum _{\mu=0}^n \vert x \vert^2 \frac{\partial ^2 \ln(a_0)}{\partial x_{\mu} \partial \overline{x_{\mu}}} \right) ,\nonumber
\end{gather}
where $\Pi_{k}(x,x)=a_{0}(x,x)k^{n+1}+a_{1}(x,x)k^{n}+O(k^{n-1}) + \ldots$.
\end{theorem}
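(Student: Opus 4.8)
The plan is to feed the Bergman kernel asymptotics of Theorem~\ref{bergasym} into the pointwise identity $\pi^{*}\Pi_k(x,x) = B_k([x],[x])/h_E$ of Lemma~\ref{bergszego}. Since $h_E$ (Lemma~\ref{07}) does not depend on $k$, dividing the Bergman expansion term by term by $h_E$ yields an asymptotic expansion of $\pi^{*}\Pi_k$ whose successive coefficients are $b_0/h_E,\,b_1/h_E,\dots$; pulling back by $\pi$ and re-expressing in the ambient homogeneous coordinates on $\Omega$ (using $\Pi_k(\lambda x,\lambda x)=|\lambda|^{2k}\Pi_k(x,x)$ to relate boundary and interior values) then identifies these quotients with the coefficients $a_0$ and $a_1$ of the statement. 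Thus the whole problem reduces to evaluating $b_0$ and $b_1$ for $(\mathcal{O}(k)\otimes E,\,h^k\otimes h_E)$ over $(\mathbb{CP}^n,\omega_{FS})$ and rewriting the outcome in the $x$-coordinates.

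First I would compute $a_0$. By Theorem~\ref{bergasym}, $b_0=\det(\dot{R_L}/2\pi)\,\mathrm{Id}_E$, and since $\dot{R_L}=\Theta_h\,\omega_{FS}^{-1}$ as endomorphisms one has $b_0=\det(\Theta_h)\big/\big((2\pi)^n\det(\omega_{FS})\big)$. I would substitute $\det(\Theta_h)=\left(\frac{2}{l}\right)^{n+2}\left(|x_0|^2/\rho\right)^{n+1}\det(H(\rho))$ from Lemma~\ref{3}, the Fubini--Study determinant $\det(\omega_{FS})=(1+|z|^2)^{-(n+1)}$ on $U_0$, and the formula for $h_E$ from Lemma~\ref{07}. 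Using $1+|z|^2=|x|^2/|x_0|^2$ on $U_0$ together with the homogeneity $\psi(\lambda x)=|\lambda|^{-1}\psi(x)$ to clear the projective factors, the quotient $b_0/h_E$ should collapse — after cancelling the powers of $|x_0|$, $|x|$, and $2$ — to the claimed expression for $a_0$, with $u$ and $\psi$ evaluated at the boundary point $x\psi(x)$.

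Next I would compute $a_1=b_1/h_E$. In the bracket defining $b_1$ the two Laplacian-type terms combine: since $-\Delta_\omega f=\omega_{FS}^{i\bar j}f_{i\bar j}$ and $\sqrt{-1}\Lambda_\omega\partial\bar\partial g=\omega_{FS}^{i\bar j}g_{i\bar j}$, one gets $-\Delta_\omega\log b_0+\sqrt{-1}\Lambda_\omega(-\partial\bar\partial\log h_E)=\omega_{FS}^{i\bar j}\big(\log(b_0/h_E)\big)_{i\bar j}=\omega_{FS}^{i\bar j}(\log a_0)_{i\bar j}$, the additive constants dropping under $\partial\bar\partial$. Inserting the scalar curvature $r_\omega=n(n+1)$ of the K\"ahler--Einstein $(\mathbb{CP}^n,\omega_{FS})$ gives $a_1=a_0\big(\tfrac12 n(n+1)+\omega_{FS}^{i\bar j}(\log a_0)_{i\bar j}\big)$. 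To reach the displayed form it then remains to prove the conversion identity
\begin{equation}
\omega_{FS}^{i\bar j}(\log a_0)_{i\bar j}=\frac12\sum_{\mu=0}^{n}|x|^2\,\frac{\partial^2\log a_0}{\partial x_\mu\,\partial\overline{x_\mu}},\nonumber
\end{equation}
after which $a_1=\frac{a_0}{4}\big(2n(n+1)+2\sum_\mu|x|^2(\log a_0)_{x_\mu\bar x_\mu}\big)$ as stated.

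The hard part will be this last identity, namely rewriting the Fubini--Study trace of $\partial\bar\partial\log a_0$ as the flat, $|x|^2$-weighted Laplacian on $\mathbb{C}^{n+1}$. The plan is to use the explicit inverse metric $\omega_{FS}^{i\bar j}=(1+|z|^2)(\delta_{ij}+z_i\bar z_j)$ on $U_0$ together with the chain rule relating the chart derivatives $\partial_{z_i}$ to the ambient derivatives $\partial_{x_\mu}$. The $S^1$-invariance of the functions involved and the homogeneity weight carried by the $\psi$-factor in $a_0$ must be tracked carefully, since $a_0$ is not itself degree-zero homogeneous, and it is precisely the Euler-type relations from homogeneity that convert the $(1+|z|^2)(\delta_{ij}+z_i\bar z_j)$-contraction into the symmetric sum $\sum_{\mu=0}^{n}|x|^2\partial_{x_\mu}\partial_{\bar x_\mu}$. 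Once this bookkeeping is in place, assembling $a_0$ and $a_1$ completes the proof.
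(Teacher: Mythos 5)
Your proposal follows the paper's own argument essentially verbatim: both divide the Bergman expansion of Theorem~\ref{bergasym} by $h_E$ via Lemma~\ref{bergszego}, obtain $a_0$ from $\det(\Theta_h)/\big((2\pi)^n\det(\omega_{FS})\,h_E\big)$ using Lemmas~\ref{07} and~\ref{3}, and obtain $a_1$ by rewriting the Fubini--Study trace of $\partial\bar\partial\log a_0$ in the ambient $x$-coordinates via the homogeneity (Euler) relations, which is exactly what the paper does with Proposition~\ref{xcood} and equations~\ref{sec} with $l=0$. The only cosmetic difference is that you combine $\log b_0-\log h_E$ into $\log a_0$ before changing coordinates, whereas the paper converts the terms $\ln(a_0h)$ and $\ln(h)$ separately and then subtracts.
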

\begin{proof}
By using equation lemma \ref{bergszego}, theorem \ref{bergasym}, lemma \ref{07}, and lemma \ref{3} we have
\begin{eqnarray}
a_{0}(x,x)&=&\frac{1}{h_E}\frac{1}{(2\pi)^{n}}\frac{\det(\Theta_{h})}{\det(\Theta_{FS})}\\
& &=\frac{1}{h_E}\frac{1}{(2\pi)^{n}}\frac{\left(\frac{2}{l}\right)^{n+2} \left(\frac{\vert x_0 \vert^2}{\rho}\right)^{n+1} \det(H(\rho)) }{\frac{|x_{0}|^{2n+2}}{(|x_{0}|^{2}+\dots+|x_{n}|^{2})^{n+1}}}\\
& &=\frac{1}{h_{E}}\frac{1}{(2\pi
)^{n}}\left(\frac{2}{l}\right)^{n+2}\left(\frac{|x|^{2}}{\rho}\right)^{n+1}\det(H(\rho))\\
& &=(\frac{2}{l})^{n+2}\frac{\det(H(\rho))}{2\pi^{n+1}e^{u(x\psi(x))}\psi^{2n-l(n+1)}\vert d\psi\vert}
\end{eqnarray}
An easy application of lemma \ref{bergszego}, theorem \ref{bergasym}, lemma \ref{07}, and proposition \ref{xcood} shows that
\begin{eqnarray}
a_1 (x,x)&=& \frac{a_0}{4} (2n(n+1) + 4\displaystyle \left( \sum _{i=1}^n \vert x \vert^2 \frac{\partial ^2 \ln(a_{0}h)}{\partial x_i \partial \bar{x_i}} + \frac{\vert x \vert^2}{\vert x_0 \vert^2} \bar{x_i} x_j \frac{\partial ^2 \ln(a_{0}h)}{\partial x_i \partial \bar{x_j}} \right)  \nonumber \\ & & - 4\left( \sum _{i=1}^n \vert x \vert^2 \frac{\partial ^2 \ln(h)}{\partial x_i \partial \bar{x_i}} + \frac{\vert x \vert^2}{\vert x_0 \vert^2} \bar{x_i} x_j \frac{\partial ^2 \ln(h)}{\partial x_i \partial \bar{x_j}} \right) ). \nonumber
\end{eqnarray}
Using equations \ref{sec} with $l=0$ yields the desired formula for $a_1$. Hence we have
\begin{gather}
a_1 (x,x)= \frac{a_0}{4} \left( 2n(n+1) + 2\displaystyle  \sum _{\mu=0}^n \vert x \vert^2 \frac{\partial ^2 \ln(a_0)}{\partial x_{\mu} \partial \overline{x_{\mu}}} \right) .\nonumber
\end{gather}
\end{proof}

\end{document}